\documentclass{article}
\usepackage{color}
\usepackage{amsmath}
\usepackage{graphicx}
\usepackage{footnote}
\usepackage{subscript}
\usepackage{amsfonts,color}
\usepackage{amsmath,amssymb}
\usepackage{mathtools}
\usepackage{epsfig}
\usepackage{amssymb} 
\usepackage{mathtools}
\usepackage{amsthm,wasysym}
\usepackage[english]{babel}
\usepackage{cancel}
\usepackage{colonequals}
\makeatletter
\usepackage{float}
\usepackage{wrapfig}
\usepackage{nicefrac,enumitem}
\usepackage{hyperref}
\usepackage{fixltx2e}
\restylefloat{figure}

\makeatletter
\let\@fnsymbol\@arabic
\makeatother

\usepackage{bbm}
\newcommand{\id}{{\boldsymbol{\mathbbm{1}}}}

\newcommand{\tr}{{\rm tr}}

\newcommand{\sym}{{\rm sym}}

\def\dd{\displaystyle}

\newtheorem{theorem}{Theorem}[section]
\newtheorem{lemma}[theorem]{Lemma}

\newtheorem{remark}[theorem]{Remark}

\def\dd{\displaystyle}

\usepackage{multicol}
\usepackage[font=small]{caption}
\newcommand{\citet}[2][]{\citeauthor{#2} \cite[#1]{#2}}
\graphicspath{{gfx/}}
\RequirePackage{amsmath}	
\RequirePackage{mathtools}	
\newcommand{\pdd}[3][]{\frac{\partial\ifx&#1&\else^{#1}\fi #2}{\partial #3}}

\DeclareMathOperator{\@macros@div}{div}
\renewcommand{\div}{\@macros@div}

\providecommand{\availableaturl}[2][]{%
	available at \url{#2}%
}

\usepackage[babel,german=quotes]{csquotes} 

\makeatletter%
\@ifpackageloaded{hyperref}{}{}%
\makeatother%

\usepackage{color}
\usepackage{amsmath}

\usepackage{footnote}
\usepackage{subscript}
\usepackage{amsfonts,color}
\usepackage{amsmath,amssymb}
\usepackage{amssymb} 
\usepackage{mathtools}
\usepackage{amsthm,wasysym}
\usepackage{framed}
\usepackage{cancel}
\usepackage{graphicx}
\usepackage{caption}
\usepackage{subcaption}
\usepackage{rotating}

\usepackage[english]{babel}
\usepackage[utf8]{inputenc}
\makeatletter
\usepackage{float}
\usepackage{wrapfig}
\restylefloat{figure}

\makeatletter
\let\@fnsymbol\@arabic
\makeatother

\usepackage{bbm}

\def\dd{\displaystyle}
\usepackage{geometry}

\def\barr{\begin{array}}
	\usepackage{lscape}

	\def\earr{\end{array}}
\def\bec#1{\begin{equation}\label{#1}}
	\def\becn{\begin{equation*}}
		\def\endec{\end{equation}}
	\def\endecn{\end{equation*}}
\def\dd{\displaystyle}
\def\bfm#1{\mbox{\boldmat}}

\renewcommand{\dd}{\displaystyle}

\usepackage{enumitem}

\usepackage{pifont}

\newcommand{\imin}[1][\lambda]{m{\ifx&#1&\else(#1)\fi}}
\newcommand{\imax}[1][\lambda]{M{\ifx&#1&\else(#1)\fi}}
\newcommand{\iset}[1][\lambda]{J{\ifx&#1&\else(#1)\fi}}
\allowdisplaybreaks[1]

\usepackage{titletoc}

		\title{Nonlinear Kirchhoff-Love shell models derived from the Ciarlet-Geymonat energy: modelling and well-posedness}

\begin{document}
      
	\author{Ionel-Dumitrel Ghiba\thanks{Corresponding author: 
			Ionel-Dumitrel Ghiba, \ \  Department of Mathematics, Alexandru Ioan Cuza University of Ia\c si,  Blvd.
			Carol I, no. 11, 700506 Ia\c si,
			Romania; Octav Mayer Institute of Mathematics of the
			Romanian Academy, Ia\c si Branch,  700505 Ia\c si, email:  dumitrel.ghiba@uaic.ro} \quad and \quad  Trung Hieu Giang
		 \thanks{Trung Hieu Giang,  \ \ Department of Mathematical Analysis, Faculty of Mathematics and Physics, Charles University, Praha, Czech Republic;  Institute of Mathematics, Vietnam Academy of Science and Technology, 18 Hoang Quoc Viet, Hanoi, Vietnam, email: trung-hieu.giang@matfyz.cuni.cz}\quad and \quad  C\u at\u alina Ureche\thanks{ C\u at\u alina Ureche, \ \ Department of Mathematics, Alexandru Ioan Cuza University of Ia\c si,  Blvd.
		 	Carol I, no. 11, 700506 Ia\c si,
		 	Romania, email:  catalinaureche997@gmail.com }
	}

	\maketitle
	\begin{abstract}
		Starting from a three-dimensional model based on the Ciarlet-Geymonat energy, we derive nonlinear shell models within the classical elasticity theory of compressible isotropic materials. The Neo-Hookean term involving the norm of the deformation gradient leads to an energy depending on the first, the second, and the third fundamental forms of the deformed midsurface. The coefficients appearing in the resulting shell models depend on the classical Lam\'e coefficients of the three-dimensional material, on the thickness of the shell, and on the mean and Gaussian curvatures of the reference configuration. This shows that the behavior of the shell is influenced not only by the elastic coefficients but also by the initial geometry of the three-dimensional thin body. The purely volumetric Ciarlet-Geymonat contribution of the three-dimensional energy leads to two-dimensional energies depending on the mean and Gaussian curvatures of both configurations, namely the undeformed and the deformed midsurfaces. Since a purely asymptotic derivation may lead to nonlinear terms for which the lower semicontinuity of the resulting functionals is not clear, we combine the asymptotic reduction through the thickness with Simpson's quadrature rule applied to the purely volumetric energy terms, ensuring that the lower semicontinuity is inherited from the three-dimensional model. After deriving the model, we establish the well-posedness of the proposed shell energies. More precisely, we prove coercivity and lower semicontinuity property of the resulting functional and show the existence of minimizers in appropriate Sobolev spaces. A key ingredient in the proofs is a polyconvexity concept in the shell theory, together with some results concerning the weak convergence of terms involving the mean curvature of the deformed midsurface.
	\end{abstract}

    \begin{footnotesize}
	    \tableofcontents
	\end{footnotesize}

	\section{Introduction}\setcounter{equation}{0}

The aim of the present paper is to derive well-posed nonlinear shell models starting from the three-dimensional Ciarlet-Geymonat energy. All the models in this article are formulated in a variational setting. The resulting shell models are formulated in terms of the first, second, and third fundamental forms of the deformed midsurface, as well as the curvatures of the deformed midsurface. This allows us to prove the existence of minimizers for the derived models.

We start with a three-dimensional variational problem that describes the equilibrium of a {\it shell-like thin domain} $\Omega_\xi\subset\mathbb{R}^3.$ A generic point of $\Omega_\xi$ will be denoted by $(\xi_1,\xi_2,\xi_3)$ on a fixed standard basis $e_1, e_2, e_3$ of $\mathbb{R}^3$. The deformation of the body occupying the domain $\Omega_\xi$ is described by a map 
$
	\varphi_\xi:\Omega_\xi\subset\mathbb{R}^3\rightarrow\mathbb{R}^3 
$ (\textit{called the deformation}). The deformation of the domain $\Omega_\xi$ is not considered a purely geometric transformation, but rather the deformation of an elastic body that occupies this domain and is composed of a homogeneous and isotropic elastic material. Moreover, the reference configuration $\Omega_\xi$ is assumed to be a natural (stress-free) state. We denote the current configuration by $\Omega_c:=\varphi_\xi(\Omega_\xi)\subset\mathbb{R}^3$. The gradient of the deformation will be denoted by\footnote{Here, given $z_1,z_2,z_3\in \mathbb{R}^{n\times k}$, the notation $(z_1\,|\,z_2\,|\,z_3)$ means a matrix $Z\in \mathbb{R}^{n\times 3k}$ obtained by taking $z_1,z_2,z_3$ as block matrices in rows. The gradient matrix is defined by $
\nabla_\xi\varphi:=\left(
\nabla_\xi\,  \varphi_1\,|\,
\nabla_\xi\, \varphi_2\,|\,
\nabla_\xi\, \varphi_3
\right)^T\equiv(\partial_{\xi_1}\varphi\,|\,\partial_{\xi_2}\varphi\,|\,\partial_{\xi_3}\varphi)\subset \mathbb{R}^{3\times 3}$.} $F_\xi=\nabla_\xi\varphi_\xi\subset \mathbb{R}^{3\times 3}$. We will assume that the deformation $\varphi_\xi$ is orientation-preserving, i.e., $F_\xi\in {\rm GL}^+(3):=\{X\in\mathbb{R}^{n\times n}\;|\det({X})> 0\}$.

There are many possible energies that have been used in nonlinear elasticity. This is somehow normal, as long as there is no energy able to capture simultaneously all the physical expectations and the mathematical properties that assure the well-posedness of the problem. 
Our shell models are constructed from a parental three-dimensional model based on a classical compressible polyconvex Neo-Hooke-type energy (Hadamard materials) \cite{Antman95,Ciarlet88} of the type
\begin{align}\label{ine}
	W_{\rm NH} (F_\xi)=\frac{\mu }{2}\left(\|F_\xi\|^2-3\right)+ {\mathcal{H}}(\det F_\xi) \qquad \forall F_\xi\in {\rm GL}^+(3),
\end{align}
where ${\mathcal{H}}:(0,\infty)\to \mathbb{R}$ is a convex function, and $\|\cdot\|$ is the Frobenius tensor norm\footnote{The standard Euclidean scalar product on $\mathbb{R}^{n\times n}$ is given by
	$ \bigl\langle  {X},{Y}\bigr\rangle _{\mathbb{R}^{n\times n}}={\rm tr}(X^T Y)$, and thus the
	{(squared)}
	Frobenius tensor norm is
	$\lVert {X}\rVert^2= \bigl\langle  {X},{X}\bigr\rangle _{\mathbb{R}^{n\times n}}$.}. In fact, we give special attention to the Ciarlet-Geymonat-type energy \cite{ciarlet1982quelques,ciarlet1982lois} for compressible materials, i.e., when the function ${\mathcal{H}}$ is of the form
\begin{align}\label{hcg}
	{\mathcal{H}}_{\rm CG}(x)=-\mu \,\log x+\frac{\lambda}{4}(x^2-2\, \log x-1), 
\end{align}
with given positive constitutive parameters (Lam\'e coefficients) $\mu $ and $\lambda$ of the elastic materials (known from experiments in the three-dimensional space)\footnote{For small deformations, i.e., for
$
F_\xi = I + H,
\  \|H\|\ll 1 
$, the following quadratic approximation holds true $W_{\rm CG} (F_\xi):=\frac{\mu }{2}\left(\|F_\xi\|^2-3\right)+ {\mathcal{H}}_{\rm CG}(\det F_\xi) 
=
\mu\,\|\mathrm{sym}\,H\|^2
+
\frac{\lambda}{2}[\mathrm{tr}H]^2
+
O(\|H\|^3)$. Hence,  the small-deformation limit of the Ciarlet--Geymonat model reduces to the classical linear elasticity energy. Moreover, the first Piola-Kirchhoff stress tensor vanishes at $F_\xi=\id_3$. Hence, the undeformed configuration carries no stress, which is a physically required property for elastic materials.}. Throughout the entire article, we shall refer to this model as the Ciarlet-Geymonat three-dimensional elastic model. 

In this paper, we assume that the reference configuration is a shell, i.e., it is the range of the following diffeomorphism $\Theta:\Omega_h\subset\mathbb{R}^3\rightarrow\mathbb{R}^3$ 
\begin{equation}\label{defTheta}
	\Theta(x_1,x_2,x_3)\,=\,{y_0}(x_1,x_2)+x_3\ n_{y_0}(x_1,x_2), \ \ \ \ \ \ \ \ \quad  n_{y_0}\,=\,\dd\frac{\partial_{x_1}{y_0}\times \partial_{x_2}{y_0}}{\lVert \partial_{x_1}{y_0}\times \partial_{x_2}{y_0}\rVert}\, ,
\end{equation}
where
\begin{equation}\Omega_h=\left\{ x=(x_1,x_2,x_3) \,\Big| \, x'=(x_1,x_2)\in\omega,  \, -\dfrac{h}{2}\,< x_3<\, \dfrac{h}{2}\, \right\} = \,\dd\omega\,\times\left(-\frac{h}{2}, \frac{h}{2}\right),\end{equation}
with $\omega\subset\mathbb{R}^2$ a bounded domain with Lipschitz boundary
$\partial \omega$, the constant length $h>0$ is the \textit{thickness of the shell}, and
${y_0}:\bar{\omega}\to \mathbb{R}^3$ is a regular function of class $C^2(\bar{\omega})$ describing a \textit{surface imbedded in the
		three-dimensional space}.
For shell-like bodies, we consider the domain $\Omega_h $ to be {thin} (but still a three-dimensional body), i.e., the thickness $h$ is {small} (but not necessarily going to zero). Thus, the domain $\Omega_h $ can be viewed as a \textit{fictitious Cartesian configuration} of the body.
The diffeomorphism $\Theta$ maps the midsurface $\omega$ of the fictitious Cartesian configuration parameter space $\Omega_h$ onto the midsurface $\omega_\xi={y_0}({\omega})$ of $\Omega_\xi$, and $n_{y_0}$ is the unit normal vector for $\omega_\xi$. For simplicity and where no confusion may arise, in what follows we will explicitly omit writing the arguments $(x_1,x_2, x_3)$.

In this paper, we consider the following Kirchhoff-Love ansatz for the function $\varphi$, which maps the fictitious planar reference configuration $\Omega_h$ into the deformed (current) configuration $\Omega_c$
\begin{equation}
\varphi:=\varphi_\xi\circ \Theta:\Omega_h\rightarrow \Omega_c, \quad 	\varphi(x_1,x_2,x_3)\,=\,m(x_1,x_2)+x_3 n_m(x_1,x_2), \ \quad  n_m\,=\,\dd\frac{\partial_{x_1}m\times \partial_{x_2}m}{\lVert \partial_{x_1}m\times \partial_{x_2}m\rVert}\,,
\end{equation}
where $m:\omega\to \mathbb{R}^3$ is a regular mapping. Therefore, in our model, the normals remain straight (no transverse shear) and orthogonal to the midsurface after deformation, and no stretching in the thickness direction is considered. By performing an asymptotic direct approach together with Simpson's integration rule, we propose some models that have the following general form
\begin{equation}\label{e89}
		{\mathcal{J}(m)}=\int_{\omega}   \Big[  \,
		W_{\mathrm{shell}}\big( {\rm I}_{m}, {\rm II}_{m}, {\rm III}_{m}\big)+W_{\rm curv}({{\rm a}_m}, {{\rm a}_m} {\rm A}_m^\pm)\Big]{\rm det}(\nabla {y_0}|n_{y_0})   \mathrm d x' - \mathcal{L}(m,n_m)\,,
\end{equation}
or 
\begin{align}\label{e89b}
		{\mathcal{J}(m)} =&\int_{\omega}   \Big[  \,
		W_{\mathrm{shell}}\big( {\rm I}_{m}, {\rm II}_{m}, {\rm III}_{m}\big)+W_{\rm curv}^{(1)}({{\rm a}_m}, {{\rm a}_m} {\rm A}_m^\pm) \notag\\
        &\qquad +W_{\rm curv}^{(2)}({{\rm a}_m}, {\rm a}_m\Delta \,{\rm H}_{m}, {\rm a}_m\Delta \,{\rm K}_{m})\Big]{\rm det}(\nabla {y_0}|n_{y_0})       \mathrm d x' - \mathcal{L}(m,n_m)\,,
\end{align}
where ${\rm I}_{m}, {\rm II}_{m}, {\rm III}_{m}$ are the matrix representations of the first, the second, and the third fundamental forms, respectively, in the fixed basis $e_1, e_2, e_3$, and ${\rm a}_m=\sqrt{\det {\rm I}_{m}}$, $\, {\rm K}_{m} $ is the Gau{\ss}  curvature of the deformed midsurface, $\, {\rm H}_{m} $ is the mean curvature of the deformed midsurface, while $
	\Delta{\rm H}_m =\, {\rm H}_m-\, {\rm H}_{y_0}$ and $\Delta{\rm K}_m =\, {\rm K}_m-\, {\rm K}_{y_0}$ are the differences between the curvatures of the midsurfaces of the deformed and the reference configurations, respectively, and ${\rm A}_m^\pm=1 \mp h\,{\rm H}_m + \frac{h^2}{4}\,{\rm K}_m.
$
The specific form of the energies involved, $W_{\mathrm{shell}}$, $W_{\rm curv}$, $W_{\rm curv}^{(1)}$, and $W_{\rm curv}^{(2)}$, as well as other rigorous details concerning the notation, will be presented in the next sections.

The models derived in the present paper involve, in a unitary and fully argued way, some energy terms appearing in various other theories from classical elasticity. For instance, energy terms of the form $W_{\mathrm{shell}}\big( {\rm I}_{m}, {\rm II}_{m}, {\rm III}_{m}\big)$ have been considered by Mardare \cite{mardare2008derivation, mardare2019nonlinear}, {by Giang and Mardare \cite{giang2024existence}, and by Giang \cite{giang2025existence}}. In fact, the importance of the use of the third fundamental form is due to the Korn-type inequalities that may be proved for such a model. A nonlinear Korn inequality on a surface is any estimate of the distance, up to a proper isometry of $\mathbb{R}^3$, between two surfaces measured in some appropriate norm, in terms of the distances between their three fundamental
forms measured in some appropriate norms. The first such nonlinear Korn inequality on a surface, which is due to Ciarlet
and Mardare \cite{ciarlet2005recovery}, was then refined (also by showing the role of the third fundamental form) in 
\cite{ciarlet2006nonlinear,ciarlet2016nonlinear,ciarlet2019new,malin2018nonlinear,malin2021nonlinear}.  
The starting point of the proof of these results is the ``geometric rigidity lemma'' due to Friesecke, James, and M\"uller \cite{Mueller02,Mueller03}. The remark that the two first fundamental forms alone are not enough to model the bending and the change of curvature behavior of a shell was also formulated in the context of the constrained Cosserat shell model \cite{GhibaNeffPartIII}. In \cite{GhibaNeffPartVI} it was explained why, even if it has become commonplace for the stored energy function of any realistic shell model to align ``within first order'' with the classical Koiter membrane-bending (flexural) shell model, the differences of the first two fundamental forms are not measures which describe the bending behavior or variations of the curvatures of the shell. There are numerous reasons why a modified version of the classical Koiter model should be considered. This conclusion is supported not only by Koiter himself \cite{koiter1973foundations}, but also by Sanders and Budiansky \cite{budiansky1962best,budiansky1963best}, who independently developed similar theories around the same time. 
Recent papers published by {\v{S}}ilhav{\'y} \cite{vsilhavycurvature}, and Virga \cite{virga2023pure}, together with the Cosserat shell models derived in \cite{GhibaNeffPartIII,saem2023geometrically,saem2023explicit}, have prompted us to propose and accept various strain tensors in shell models. Anicic and L{\'{e}}ger also explored this question some time ago in the context of linear models, see \cite{anicic1999formulation,anicic2002mesure} and \cite{anicic2003shell}.  
The issue of appropriate nonlinear shell bending strain measures was also already addressed by Acharya in \cite{acharya2000nonlinear} and more recently by Vitral and Hanna \cite{vitral2023dilation}. Some invariance properties related to bending strain measures were introduced even earlier by Antman \cite{antman1968general}, while other strain measures based on the classical Biot three-dimensional parent elastic energy were given by Atluri \cite{atluri1984alternate}. 

Our models involve terms up to order ${\rm O}(h^5)$ in the thickness. We mention that the terms up to order ${\rm O}(h^5)$ allow us to obtain better conditions on the thickness in order to have well-posed models. On the other hand, terms of order $h^5$ are usually considered when some energy terms depending on the third fundamental form are included in the structure of the shell model \cite{mardare2019nonlinear,giang2024existence}. In comparison with these models, in our paper, all the energy terms appear naturally due to the derivation approach, and there is no part of the energy that is added ad hoc without being an approximation of some terms of the parental three-dimensional variational problem.    
We note that in classical nonlinear elasticity $\Gamma$-limits for plates and higher scalings (energy scaling with $h^\beta$) of the nonlinear elastic energy were given for $\beta=2$ in \cite{Mueller02,Mueller03,Mueller02b, Mueller06}, for $\beta\geq 2$ in \cite{conti2008confining}, while $\Gamma$-limits for shells and higher scalings were obtained for $\beta=2$ in \cite{Mueller06} and for $\beta\geq 4$ in \cite{lewicka2009nonlinear,lewicka2011note,lewicka2010shell,lewicka2011matching}, see also \cite{Babadjian} for various mathematical contributions to the behavior of
thin films. In the Cosserat framework, the models given in \cite{GhibaNeffPartI,GhibaNeffPartIII} are up to order ${\rm O}(h^5)$, and it has been proven that, besides the advantages concerning their existence results \cite{GhibaNeffPartII}, such models lead to good comparison with the experimental results \cite{nebel2023geometrically}.

Another characteristic of the models introduced in this paper is that the constitutive coefficients are those from the three-dimensional formulation, while the influence of the curved initial shell configuration appears explicitly through the derivation approach in the expression of the coefficients of the energies for the reduced two-dimensional variational problem. Hence, it shows that the elastic properties of shells depend on the material parameters (Lam\'e coefficients), on the thickness, and on the geometry of the reference (undeformed) configuration, which is physically expected.  Indeed, experimental studies on cylindrical and spherical shells have shown that the mechanical response and stability of shell structures depend strongly on both the thickness of the shell and the curvature of the reference configuration; see, for instance, the classical experiments of Zoelly \cite{zoelly1915}, the studies on cylindrical shell buckling by Hoff \cite{von1941buckling}, the analysis of shell stability by Koiter \cite{koiter1969}, as well as more recent experimental investigations of elastic shells such as the indentation experiments of Vella et al.~\cite{vella2012indentation} and the work on geometry-induced rigidity by Lazarus, Florijn and Reis \cite{lazarus2012geometry}. In some works in the mathematical mechanics literature, the identification of the constitutive parameters from the model is done only a posteriori by fitting the solutions of some specific problems with the solutions obtained by considering the shell as a three-dimensional body. However, this fitting is usually done only for linear problems, since one reason for the existence of nonlinear shell models is that even the classical three-dimensional problem is difficult to solve for nonlinear problems, and such models do not involve the curvatures of the reference configurations in the construction of the model, see \cite{Birsan-Alten-2011}. The derivation we propose avoids this difficulty in the nonlinear case and provides a consistent link between nonlinear three-dimensional elasticity and geometrically nonlinear shell theories. 

There is another particularity of our models, namely that the specific form of the pure volumetric part of the Ciarlet-Geymonat energy leads us, after the asymptotic method and after using Simpson's rule, to some energy terms depending on ${\rm a}_m {\rm A}_m^{\pm}$ or on ${\rm a}_m \Delta{\rm H}_m,$ ${\rm a}_m \Delta{\rm K}_m$. We mention that there are some other models that use functionals depending on the difference of the curvatures, see, e.g., the model proposed by Helfrich \cite{helfrich1973elastic} for biological membranes,  the model previously proposed by Canham \cite{canham1970minimum} to model the shape of the human red blood cell,  and the book by Steigman \cite{steigmann2017role}. For particular geometries of the reference configuration, Helfrich's energy coincides with the Willmore functional (Blaschke bending energy) considered in geometry \cite{willmore1965note}. Before the article by Willmore, Blaschke's work considered the variational problem associated with this measure \cite{blaschke1924vorlesungen}. In fact, the origin of the functional now known as the Willmore functional goes back to the shell community, since Germain \cite{germain1831memoire} first considered the integral of the squared mean
curvature related to the behavior of elastic plates.  As noted by Anicic \cite{anicic2018polyconvexity}, the energy proposed by Helfrich is polyconvex (so it implies the lower-semicontinuity of the functional) but not orientation-preserving. This is one reason why, for the approximation of $-\log{\rm det} \nabla_\xi \varphi_\xi$, we do not use the asymptotic direct approach and prefer Simpson's integration rule. The asymptotic approach applied to $-\log{\rm det} \nabla_\xi \varphi_\xi$ destroys the good three-dimensional mathematical structure, and the lower-semicontinuity of the energy functional is lost. However, for the other pure volumetric part, i.e., $({\rm det} \nabla_\xi \varphi_\xi)^2$, the asymptotic approach may be used and leads to some energy terms related to Helfrich's model.  

Regarding the energy terms that depend on ${\rm a}_m {\rm A}_m^{\pm}$, their presence is a consequence of Simpson's integration rule. These types of energy terms were also present in some previous shell models; see, for instance, the papers by Anicic and L{\'e}ger \cite{anicic1999formulation} and then by Anicic \cite{anicic2018polyconvexity,anicic2019existence} in the context of the so-called $G^1_h$ shell models. In this paper, we consider a polyconvexity concept that is related to the approach used by Anicic in \cite{anicic2018polyconvexity,anicic2019existence} in the shell theories, and we show that the energies of the proposed models inherit this polyconvexity property from the classical polyconvexity of the three-dimensional parental energy.  Actually, in order to show the lower semicontinuity of the functionals defining the models, we will use some other results proved by Anicic in \cite{anicic2018polyconvexity,anicic2019existence}. The models proposed by Anicic were modified by adding some additional terms of order $O(h^5)$ in \cite{giang2025existence}, terms that also depend on the curvatures of the reference configuration. However, these models were not derived from well-posed three-dimensional models as in our study. 

Therefore, the present paper gives some shell models in the classical theory of isotropic elastic materials that incorporate energy terms previously considered (but not together) in various other shell models. Moreover, in our models, all these terms appear in the dimensional reduction process; they are not added ad hoc as a posteriori corrections in order to satisfy orientation preservation, coercivity, or convexity of the functionals. We show that our shell models are well posed.

    \section{The dimensional reduction of the three-dimensional model}\setcounter{equation}{0}
	
\subsection{The three-dimensional parental problem and geometry of the referential configuration}

The three dimensional deformation $\varphi_\xi$ is a solution of the following \textit{geometrically nonlinear minimization problem} posed on $\Omega_\xi$:
\begin{equation}\label{minprob1}
	\mathcal{I}(\varphi_\xi,F_\xi)=\dd\int_{\Omega_\xi}W_{\rm{CG}}(F _\xi){\rm d}{\xi}
	- \Pi(\varphi_\xi)\quad 
	{\to}
	\textrm{\ \ min.} \quad  {\rm   w.r.t. }\quad \varphi_\xi\, ,
\end{equation}
where
\begin{align}
F_\xi:\,=\,&\nabla_\xi\varphi_\xi\in\mathbb{R}^{3\times3},  \notag\\
	\dd W_{\rm{CG}}(F _\xi):\,=\,&\dd\frac{\mu}{2}\left[\|F_\xi\|^2-2\,\log (\det F_\xi)-3\right]+\frac{\lambda}{4}\left[\,(\det F_\xi)^2-2\,\log (\det F_\xi)-1\right], \notag\\
	\Pi(\varphi_\xi):\,=\,
	&\Pi_f(\varphi_\xi)+ \Pi_t(\varphi_\xi)=\textrm{the external loading potential},\\
	\Pi_f(\varphi_\xi):\,=\,&\dd\int_{\Omega_\xi} \bigl\langle  f,  u \bigr\rangle   \, {\rm d}{\xi}\,= \textrm{potential of external applied body forces $ f $},  \notag\\
	\Pi_t(\varphi_\xi):\,=\,&\dd\int_{\partial\Omega_t} \bigl\langle  t,  u \bigr\rangle   \, {\rm d}{\xi}\,= \textrm{potential of external applied boundary forces $ t $}\, ,  \label{loadpot2}\notag
\end{align}
and ${\rm d}{\xi}$ denotes the  volume element or the area element, respectively in the reference configuration, $ u= \varphi_\xi - \xi$ is the displacement vector, $ \partial\Omega_t $ is a subset of the boundary of $ \Omega_\xi $.

The Ciarlet-Geymonat energy is designed so as to satisfy the classical requirements of hyperelasticity while also providing a mathematically very good framework for the associated minimization problem. In particular, it is polyconvex and, therefore, it ensures the existence of the solution. For the Ciarlet-Geymonat choice of the volumetric term, the energy keeps the deformation orientation-preserving by penalizing $\det F_\xi\to 0^{+}$ through the logarithmic contribution, while the quadratic term in $\det F_\xi$ supplies additional convexity and growth. This combination is one of the reasons why the model has become a classical compressible hyperelastic law with good analytical properties. 
Moreover, this energy is consistent with the classical linearized elasticity near the natural configuration\footnote{Here, and in the rest of the paper, the identity tensor in $\mathbb{R}^{n \times n}$ will be denoted by $\id_n$.} $F_\xi=\id_3$.  More precisely, when the deformation is close to the identity, the stored energy admits the same quadratic approximation as the Saint-Venant--Kirchhoff model, namely a quadratic form in the Green-St. Venant strain tensor
$
E_\xi=\frac{1}{2}(C_\xi-\id_3), \  C_\xi=F_\xi^T F_\xi,
$
with Lam\'e coefficients $\lambda$ and $\mu$. Thus, at small strain, the model reproduces the standard linear elastic response, while for finite deformations, it preserves the nonlinear hyperelastic structure and the polyconvexity properties built into the original three-dimensional formulation. This agreement with linear elasticity ``to first order'' is precisely one of the main motivations for the Ciarlet-Geymonat construction.

	Applying the change of variables formula and $\nabla_\xi \varphi(x)=[\nabla_x \varphi(\Theta (x))]\,[\nabla_x \Theta(x) ]^{-1}$ we obtain a new form of the energy functional $\mathcal{I}$ which suggests to seek the unknown function $\varphi$  as solutions of the following minimization problem formulated on the plate-like domain $\Omega_h$
	\begin{equation}\label{minprob2}
		\mathcal{I}_h(\varphi,F)=\dd\int_{\Omega_h}W_{\rm{CG}}^h(F)\det \nabla \Theta(x){\rm d}{x}
		- \Pi^h(\varphi)\quad 
		{\to}
		\textrm{\ \ min.} \quad  {\rm   w.r.t. }\quad \varphi\, ,
	\end{equation}
	where
	\begin{align}
		F:\,=\,\nabla_x \varphi(\Theta (x)),  \quad 
		\dd W_{\rm{CG}}^h(F):=W_{\rm{CG}}(F\,[\nabla_x \Theta(x) ]^{-1}), \quad 
		\Pi(\varphi_\xi):\,=\,
		\dd\int_{\Omega_h} \bigl\langle  \tilde{f}, \tilde{v} \bigr\rangle   \, {\rm d}{x}+ \int_{\Gamma_t} \bigl\langle \tilde t, \tilde{v} \bigr\rangle   \, {\rm d}{x},\label{loadpot2}
	\end{align}
where $ \tilde{v} (x)= \varphi(x) - \Theta(x) $ is the displacement vector and the vector fields $ \tilde f $ and $ \tilde t $ can be determined in terms of $   f $ and $   t $, respectively, for instance
	{$ \;(\tilde f(x))_i=(f(\Theta(x)))_i\det(\nabla_x \Theta(x)).$}
	Here, $ \Gamma_t $ and $ \Gamma_d $ are nonempty subsets of the boundary of $ \Omega_h $ such that $   \Gamma_t \cup \Gamma_d= \partial\Omega_h $ and $ \Gamma_t \cap \Gamma_d= \emptyset $\,. On $ \Gamma_t $ we consider traction boundary conditions, while on $ \Gamma_d $ we have Dirichlet-type boundary conditions (i.e., $ \varphi $ is prescribed on $ \Gamma_d $). We assume that  $ \Gamma_d  $ has the form $ \Gamma_d = \gamma_d\times (-\frac{h}{2} , \frac{h}{2})$, where the curve $\gamma_d $ is a subset of $ \partial \omega $ with length$(\gamma_d)>0$. Accordingly, the boundary subset $ \Gamma_t $ has the form  $ \Gamma_t = \Big(\gamma_t\times (-\frac{h}{2} , \frac{h}{2})\Big) \cup \Big(\omega\times \Big\{\frac{h}{2}\Big\} \Big) \cup \Big(\omega\times \Big\{-\frac{h}{2}\Big\} \Big) $
	and $ \Theta(\Gamma_t) = \partial\Omega_t\, $.

	Remark that
	\begin{align}
		\nabla_x \Theta(x',x_3)&\,=\,(\nabla {y_0}|n_{y_0})+x_3(\nabla n_{y_0}|0) \, \  \forall\, x_3\in \left(-\frac{h}{2},\frac{h}{2}\right),\qquad x=(x',x_3), \quad x'=(x_1,x_2),
		\\
		\nabla_x \Theta(x',0)&=\,(\nabla {y_0}|\,n_{y_0}),\ \ [\nabla_x \Theta(x',0)]^{-T}\, e_3\,=n_{y_0},\notag
	\end{align}
	and  $\det\nabla_x \Theta(x',0)=\det(\nabla {y_0}|n_{y_0})=\sqrt{\det[ (\nabla {y_0})^T\nabla {y_0}]}$ represents the surface element.
	The matrix representation of {\it the first, the second, and the third  fundamental form}  and of the {\it the Weingarten map} (or shape operator)  on  ${y_0}(\omega)$ are given through
	\begin{align}
		\index{fundamental form ! first}\index{fundamental form ! first extended}
		\label{first_fundamental_form}
		{\rm I}_{y_0}&:\,=\,[{\nabla  {y_0}}]^T\,{\nabla  {y_0}}\,\in\mathbb{R}^{2\times 2},\qquad \quad 		{\rm II}_{y_0}:\,=\,-[{\nabla  {y_0}}]^T\,{\nabla  n_{y_0}}\in \mathbb{R}^{2\times 2}, \\
		{\rm III}_{y_0}&:\,=\,[{\nabla  n_{y_0}}]^T\,{\nabla  n_{y_0}}\in \mathbb{R}^{2\times 2}\, ,\qquad 
		 {\rm L}^{\flat}_{y_0}\,:=\, {\rm I}_{y_0}^{-1} {\rm II}_{y_0}\, , \quad \text{respectively}.\notag
	\end{align}
	Because ${\rm rank} (\nabla {y_0})\,=\,2$, the tensor $[\nabla {y_0}]^T\nabla {y_0}$ is positive definite, while since $n_{y_0}$ is orthogonal to the tangent space $T_x {y_0}$ of the surface ${y_0}$, we have that ${\rm II}_{y_0}\in {\rm Sym}(2)$. The third fundamental form is also symmetric.

	The {\it Gau{\ss} curvature} $\, {\rm K}_{y_0}$ and the {\it mean curvature} $\,{\rm H}_{y_0}\,$  of the surface {${y_0}(\omega)$}  are determined by
	\begin{align}
		\index{Gauss curvature}
		\label{gauss_curvatureA}
		\, {\rm K}_ {{y_0}}:\,=\,{\rm det}{({\rm L}_{y_0})}\, ,\qquad 
		2\,{\rm H}_{y_0}\, :\,=\,{\rm tr}({{\rm L}_{y_0}}) \, , \quad  \text{respectively}.
	\end{align}
	The principal curvatures $\kappa_1(y_0),\kappa_2(y_0)$ \index{principal curvatures} are the solutions of $\kappa(y_0)^2-2\,{\rm H}_{y_0}\,\kappa(y_0)+\, {\rm K}_{y_0}\,=\,0,$ i.e., of the 
	characteristic equation of
	${\rm L}^{\flat}_{y_0}$.

	We note that \begin{align}\det\nabla\Theta(x',x_3)= 1-2\,\,{\rm H}_{y_0}\,x_3+\, {\rm K}_{y_0}\, x_3^2=(1-\kappa_1(y_0)\,x_3)(1-\kappa_2(y_0)\, x_3)>0, \quad \forall \, x_3\in \left[-\frac{h}{2},\frac{h}{2}\right]\end{align} if and only if \cite{anicic2018polyconvexity,anicic2019existence,GhibaNeffPartI}
\begin{align}\label{ch5in}
		h\,\max \{\sup_{x'\in {\bar{\omega}}}|{\kappa_1(y_0)}|,\sup_{x'\in {\bar{\omega}}}|{\kappa_2(y_0)}|\}<2.\end{align}
Clearly, in terms of the principal radii of curvature $R_1(y_0)=\frac{1}{{|\kappa_1(y_0)|}}$, $R_2(y_0)=\frac{1}{{|\kappa_2(y_0)|}}$, the  condition \eqref{ch5in} is equivalent to 
	\begin{align}\label{relaxedthick}
		h <2\, R_1(y_0),\quad  h <2\,R_2(y_0)\quad \text{in}\quad \omega \quad 
		\Leftrightarrow\quad h<2\,\min \{\inf_{x'\in {\omega}}{R_1}(y_0),\inf_{x'\in {\omega}}{R_2}(y_0)\}.
	\end{align}

We recall, see, e.g., \cite{GhibaNeffPartI}, that the diffeomorphism $\Theta$ has the following properties for all $x_3$:
		\begin{itemize}
			\item[i)] ${\rm det}(\nabla_x \Theta(x',x_3))\,=\,{\rm det}(\nabla {y_0}|n_{y_0})\Big[1-2\,x_3\,{\rm H}_{y_0}\,+x_3^2 \,\,{\rm K}_{y_0}\Big]$;
			\item[ii)] if $ h\,\max \{\sup_{x'\in {\omega}}|{\kappa_1}(y_0)|,\sup_{x'\in {\omega}}|{\kappa_2}(y_0)|\}<2$, then for all $x_3\in \left(-\frac{h}{2},\frac{h}{2}\right)$: \begin{align}
			[	\nabla_x \Theta(x',x_3)]^{-1}\,=\,\dd \frac{1}{1-2\,{\rm H}_{y_0}\, x_3+\, {\rm K}_{y_0}\, x_3^2}\left[\id_3+x_3({\rm L}^{\flat}_{y_0}-2\,{\rm H}_{y_0}\, \id_3)+x_3^2\,\,{\rm K}_{y_0}\,e_3 \otimes e_3
			\right] [	\nabla_x \Theta(x',0)]^{-1},
			\end{align}
		\break where  $ e_3 \otimes e_3	=\begin{footnotesize}\begin{pmatrix}
				0&0&0 \\
				0&0&0 \\
				0&0&1
		\end{pmatrix}\end{footnotesize}.$
	
		\end{itemize}
For a given matrix $M\in \mathbb{R}^{2\times 2}$ we have considered the {3D-lifted quantities}
		\begin{align}
			\widehat{M} =\begin{footnotesize}\begin{pmatrix}
					M_{11}& M_{12}&0 \\
					M_{21}&M_{22}&0 \\
					0&0&1
				\end{pmatrix}\in \mathbb{R}^{3\times 3}
				\quad \text{and} \quad
			\end{footnotesize}M^\flat =\begin{footnotesize}\begin{pmatrix}
					M_{11}& M_{12}&0 \\
					M_{21}&M_{22}&0 \\
					0&0&0
			\end{pmatrix}  \end{footnotesize}
			\in \mathbb{R}^{3\times 3}, \quad M^\flat =\widehat{M}\, \id_2^\flat, \quad \widehat{M}=M^\flat+e_3 \otimes e_3.
		\end{align}
		
		\subsection{The ansatz and the approximations}
Following the classical Kirchhoff--Love kinematics, we assume that the deformation
admits the representation
	\begin{equation}\label{KLa}
		\varphi(x_1,x_2,x_3)\,=\,m(x_1,x_2)+x_3\ n_m(x_1,x_2), \ \ \ \ \ \ \ \ \quad  n_m\,=\,\dd\frac{\partial_{x_1}m\times \partial_{x_2}m}{\lVert \partial_{x_1}m\times \partial_{x_2}m\rVert}\,,
	\end{equation}
	where $m:\omega\to \mathbb{R}^3$ is a regular mapping which defines the midsurface of the deformed configuration $\Omega_c$.  In the rest of the paper, the subscript $m$ will indicate quantities associated with the deformed midsurface
$m(\omega)$. 
Our goal is to derive shell energies that possess a polyconvex structure, ensuring the existence of minimizers. To this end, we start from the
three-dimensional model introduced in the previous subsection and perform
a dimensional reduction of the energy through the thickness. 

The entire three dimensional energy  depends on $\nabla_\xi \varphi_\xi=[\nabla_x \varphi(\Theta (x',x_3))]\,[\nabla_x \Theta(x',x_3) ]^{-1}$ where
	\begin{align}
		\nabla_x \varphi(x',x_3)\,&=\,(\nabla m|n_m)+x_3(\nabla n|0), \quad \  \forall\, x_3\in \left(-\frac{h}{2},\frac{h}{2}\right).\notag
	\end{align}
In contrast to other derivation approaches and models \cite{LeDret96,Raoult95c,ciarlet2018existence,giang2024existence,giang2025existence}, 
we explicitly retain the dependence of $\nabla_x \Theta(x',x_3)$ on $x_3$ 
throughout the construction of the model. The advantages of retaining the full $x_3$ dependence of $\nabla_x \Theta(x',x_3)$, and of not approximating $\nabla_x \Theta(x',x_3)$ too early by its value at $x_3=0$,  have been explored in classical nonlinear elasticity in classical nonlinear elasticity \cite{anicic1999formulation,anicic2018polyconvexity,anicic2019existence,SteigmannJE2012} and in the Cosserat nonlinear framework \cite{Neff_plate04_cmt,GhibaNeffPartI,Neff_Chelminski_ifb07}.

We compute
		\begin{align}
		[
		\nabla_\xi \varphi]^T[
		\nabla_\xi \varphi]=	\frac{1}{b(x_3)^2}[	\nabla_x \Theta(x',0)]^{-T}&\left[\id_3+x_3{\rm B}_{y_0} +x_3^2\,\,{\rm K}_{y_0}e_3 \otimes e_3		\right]^T [\widehat{\rm I}_m+2\,x_3 {\rm II}_m^\flat+x_3^2 {\rm III}_m^\flat]	\\&\left[\id_3+x_3{\rm B}_{y_0} +x_3^2\,\,{\rm K}_{y_0}e_3 \otimes e_3
		\right][	\nabla_x \Theta(x',0)]^{-1},\notag
	\end{align}
	where
	\begin{align}
	b(x_3)=	(1-2\,{\rm H}_{y_0}\, x_3+\, {\rm K}_{y_0}\, x_3^2), \qquad {\rm B}_{y_0} ={\rm L}^{\flat}_{y_0} - 2\,{\rm H}_{y_0} \id_3.
	\end{align}

	Using that for all $X\in \mathbb{R}^{2\times 2}$ we have 
    \begin{align}\widehat{X}=e_3 \otimes e_3+X^\flat,  \quad e_3 \otimes e_3 \widehat{X}=e_3 \otimes e_3= \widehat{X}e_3 \otimes e_3,  \quad e_3 \otimes e_3 X^\flat=0=X^\flat e_3 \otimes e_3,
    \end{align}
   after lengthy but straightforward calculations, we obtain
	\begin{align}
	\|\nabla_\xi \varphi\|^2=	\tr([\nabla_\xi \varphi]^T[\nabla_\xi \varphi])=\frac{1}{b(x_3)^2}\sum_{p=0}^4 x_3^p \tr[\widetilde P_p],
	\end{align}
	where
	\[
	\begin{aligned}
	\widetilde P_p&=[\nabla_x \Theta(x',0)]^{-T} P_p [\nabla_x \Theta(x',0)]^{-1},\quad p=1,2,3,4, \notag\\[0.4em]
		P_{0}& = \widehat{\rm I}_m, \qquad 
		P_1 = {\rm B}_{y_0}^T \widehat{\rm I}_m + \widehat{\rm I}_m {\rm B}_{y_0} + 2 {\rm II}_m^\flat, \\[0.4em]
		P_2 &= 2\,{\rm K}_{y_0} e_3 \otimes e_3
		+ {\rm B}_{y_0}^T \widehat{\rm I}_m {\rm B}_{y_0}
		+ 2\,{\rm B}_{y_0}^T {\rm II}_m^\flat
		+ 2 {\rm II}_m^\flat {\rm B}_{y_0}
		+ {\rm III}_m^\flat , \\[0.4em]
		P_3 &=\,{\rm K}_{y_0} \bigl( {\rm B}_{y_0}^T e_3 \otimes e_3 + e_3 \otimes e_3 {\rm B}_{y_0} \bigr)
		+ 2\,{\rm B}_{y_0}^T  {\rm II}_m^\flat {\rm B}_{y_0}
		+ {\rm B}_{y_0}^T {\rm III}_m^\flat 
		+ {\rm III}_m^\flat  {\rm B}_{y_0}, \notag\\[0.4em] 
		P_4 &=\,{\rm K}_{y_0}^2 e_3 \otimes e_3 + {\rm B}_{y_0}^T {\rm III}_m^\flat  {\rm B}_{y_0} .
	\end{aligned}
	\]
		
	After  multiplication with   the Jacobian $\det \nabla \Theta(x',x_3)={\rm det}(\nabla {y_0}|n_{y_0})\,b(x_3)$ we will have 
	\begin{align}
			\|\nabla_\xi \varphi\|^2\det \nabla \Theta(x',x_3)&=\frac{1}{b(x_3)}\sum_{p=0}^4 x_3^p \tr[\widetilde P_p]{\rm det}(\nabla {y_0}|n_{y_0}).
			\end{align}
			Then, using  the expansion
			(since $x_3 \in \big(-\frac{h}{2}, \frac{h}{2}\,\big)$ and $\,h\,\max \{\sup_{x'\in {\omega}}|{\kappa_1}(y_0)|,\sup_{x'\in {\omega}}|{\kappa_2}(y_0)|\}<2$)
			\begin{align}\label{e76}
				\dfrac{1}{b(x_3)} =&1+ 2\,{\rm H}_{y_0}\,x_3+ (4\,{\rm H}^2_{y_0}\,-\, {\rm K}_{y_0})\,x_3^2+
				(8\,{\rm H}^3_{y_0}\,-4\,{\rm H}_{y_0}\,\,{\rm K}_{y_0})\,x_3^3 \\&    +(\, {\rm K}_{y_0}^2-12 \,{\rm H}_{y_0}^2 \,\,{\rm K}_{y_0}+16\,{\rm H}_{y_0}^4)\,x_3^4+O(x_3^5),\notag
			\end{align}
			we have
			\begin{align}
			\|\nabla_\xi \varphi\|^2\det \nabla \Theta(x',x_3) \,=\, & [1+ 2\,{\rm H}_{y_0}\,x_3+ (4\,{\rm H}^2_{y_0}\,-\, {\rm K}_{y_0})\,x_3^2+
				(8\,{\rm H}^3_{y_0}\,-4\,{\rm H}_{y_0}\,\,{\rm K}_{y_0})\,x_3^3    \\& +(\, {\rm K}_{y_0}^2-12 \,{\rm H}_{y_0}^2 \,\,{\rm K}_{y_0}+16\,{\rm H}_{y_0}^4)\,x_3^4+O(x_3^5)]\sum_{p=0}^4 x_3^p \tr[\widetilde P_p]. \notag
			\end{align}

			Using that ${\rm B}_{y_0}^T\,e_3 \otimes e_3=-2\, {\rm H}_{y_0} \,e_3 \otimes e_3= \,e_3 \otimes e_3\,{\rm B}_{y_0}$ and $\tr \,e_3 \otimes e_3=1$, direct but lengthy computations yield
			 \begin{align}
			 	\int_{-h/2}^{h/2}&\|\nabla_\xi \varphi\|^2\det \nabla \Theta(x',x_3){\rm d}{x}_3\notag\\
				=&\alpha_0\,\langle \widehat{\rm I}_m,\, \widehat{\rm I}_{y_0}^{-1}\rangle
			 	+\alpha_1\,\Big(\langle \widehat{\rm I}_m,\,{\rm L}^{\flat}_{y_0} \widehat{\rm I}_{y_0}^{-1}\rangle+\langle \widehat{\rm I}_m,\,{\rm L}^{\flat,T}_{y_0} \, \widehat{\rm I}_{y_0}^{-1}\rangle-4\, {\rm H}_{y_0}\,\langle \widehat{\rm I}_m,\, \widehat{\rm I}_{y_0}^{-1}\rangle+2\langle  {\rm II}_m^\flat,\, \widehat{\rm I}_{y_0}^{-1}\rangle\Big)\notag\\
			 	&\quad+\alpha_2\Bigg[
			 	2\, {\rm K}_{y_0}
			 	+\Big\langle \widehat{\rm I}_m,\,{\rm L}^{\flat,T}_{y_0}\, \widehat{\rm I}_{y_0}^{-1}{\rm L}^{\flat}_{y_0}\Big\rangle
			 	-2\, {\rm H}_{y_0}\Big(\langle \widehat{\rm I}_m,\,{\rm L}^{\flat,T}_{y_0} \, \widehat{\rm I}_{y_0}^{-1}\rangle+\langle \widehat{\rm I}_m,\,{\rm L}^{\flat}_{y_0} \widehat{\rm I}_{y_0}^{-1}\rangle\Big)
			 	+4\, {\rm H}_{y_0}^2\langle \widehat{\rm I}_m,\, \widehat{\rm I}_{y_0}^{-1}\rangle\notag\\
			 	&\hspace{4.4em}
			 	+2\Big(\langle {\rm II}_m^\flat,\,{\rm L}^{\flat}_{y_0} \widehat{\rm I}_{y_0}^{-1}\rangle+\langle {\rm II}_m^\flat,\,{\rm L}^{\flat,T}_{y_0} \, \widehat{\rm I}_{y_0}^{-1}\rangle\Big)
			 	-8\, {\rm H}_{y_0}\,\langle {\rm II}_m^\flat,\, \widehat{\rm I}_{y_0}^{-1}\rangle
			 	+\langle {\rm III}_m^\flat,\, \widehat{\rm I}_{y_0}^{-1}\rangle
			 	\Bigg]\\
			 	&\quad+\alpha_3\Bigg[
			 	-4\, {\rm H}_{y_0}\, {\rm K}_{y_0}
			 	+2\Big\langle {\rm II}_m^\flat,\,{\rm L}^{\flat,T}_{y_0}\, \widehat{\rm I}_{y_0}^{-1}{\rm L}^{\flat}_{y_0}\Big\rangle
			 	-4\, {\rm H}_{y_0}\Big(\langle {\rm II}_m^\flat,\,{\rm L}^{\flat,T}_{y_0} \,\widehat {\rm I}_{y_0}^{-1}\rangle+\langle {\rm II}_m^\flat,\,{\rm L}^{\flat}_{y_0} \widehat{\rm I}_{y_0}^{-1}\rangle\Big)
			 	+8\, {\rm H}_{y_0}^2\langle {\rm II}_m^\flat,\, \widehat{\rm I}_{y_0}^{-1}\rangle\notag\\
			 	&\hspace{4.4em}
			 	+\Big(\langle {\rm III}_m^\flat,\,{\rm L}^{\flat}_{y_0} \widehat{\rm I}_{y_0}^{-1}\rangle+\langle  {\rm III}_m^\flat,\,{\rm L}^{\flat,T}_{y_0} \, \widehat{\rm I}_{y_0}^{-1}\rangle\Big)
			 	-4\, {\rm H}_{y_0}\,\langle  {\rm III}_m^\flat,\, \widehat{\rm I}_{y_0}^{-1}\rangle
			 	\Bigg]\notag\\
			 	&\quad+\alpha_4\Bigg[
			 	\, {\rm K}_{y_0}^2
			 	+\Big\langle  {\rm III}_m^\flat,\,{\rm L}^{\flat,T}_{y_0}\, \widehat{\rm I}_{y_0}^{-1}{\rm L}^{\flat}_{y_0}\Big\rangle
			 	-2\, {\rm H}_{y_0}\Big(\langle  {\rm III}_m^\flat,\,{\rm L}^{\flat,T}_{y_0} \, \widehat{\rm I}_{y_0}^{-1}\rangle+\langle {\rm III}_m^\flat,\,{\rm L}^{\flat}_{y_0} \widehat{\rm I}_{y_0}^{-1}\rangle\Big)
			 	+4\, {\rm H}_{y_0}^2\langle  {\rm III}_m^\flat,\, \widehat{\rm I}_{y_0}^{-1}\rangle\notag
			 	\Bigg],
			 \end{align}
             	where
			\begin{align}
			\alpha_0&=h+\frac{h^3}{12}(4\, {\rm H}_{y_0}^2-\, {\rm K}_{y_0})+\frac{h^5}{80}(\, {\rm K}_{y_0}^2-12\, {\rm H}_{y_0}^2\, {\rm K}_{y_0}+16\, {\rm H}_{y_0}^4),
			\quad 
			\alpha_1=\frac{h^3}{12}(2\, {\rm H}_{y_0})+\frac{h^5}{80}(8\, {\rm H}_{y_0}^3-4\, {\rm H}_{y_0}\, {\rm K}_{y_0}),\\
			\alpha_2&=\frac{h^3}{12}+\frac{h^5}{80}(4\, {\rm H}_{y_0}^2-\, {\rm K}_{y_0}),\qquad
			\alpha_3=\frac{h^5}{80}(2\, {\rm H}_{y_0}),\qquad
			\alpha_4=\frac{h^5}{80}.\notag
			\end{align}

Let us introduce the following  three linear operators $\mathcal F_i:\mathbb{R}^{2\times 2}\to \mathbb{R}$, $i=0,1,2$
			 \begin{align}
            \label{basic-contractions}
			 \mathcal F_0(Q)&:=\langle Q,{{\rm I}}_{y_0}^{-1}\rangle,\notag\\
			 \mathcal F_1(Q)&:=\langle Q,{\rm L}_{y_0}{{\rm I}}_{y_0}^{-1}+{{\rm I}}_{y_0}^{-1}\,{\rm L}_{y_0}\rangle,\\
			 \mathcal F_2(Q)&:=\langle Q,{\rm L}^{T}_{y_0}{{\rm I}}_{y_0}^{-1}{\rm L}_{y_0}\rangle, \qquad  \qquad Q\in \mathbb{R}^{2\times 2}.\notag
			 \end{align}
With the help of these functions and using the identities $ \widehat X\widehat{Y}=\widehat{X\, Y}$, $ X^\flat {Y}^\flat=(X\, Y)^\flat$, $ X^\flat \widehat{Y}=(X\, Y)^\flat$, $\langle X^\flat, \widehat{Y}\rangle_{\mathbb{R}^{3\times 3}}=\langle X, {Y}\rangle_{\mathbb{R}^{2\times 2}}$, $\langle X^\flat, Y^\flat\rangle_{\mathbb{R}^{3\times 3}}=\langle X, {Y}\rangle_{\mathbb{R}^{2\times 2}}$, $\langle \widehat X, \widehat Y\rangle_{\mathbb{R}^{3\times 3}}=1+\langle X, {Y}\rangle_{\mathbb{R}^{2\times 2}}$ for all $X,Y\in \mathbb{R}^{2\times 2}$, we have
				\begin{align}
			\int_{-h/2}^{h/2}\|\nabla_\xi \varphi\|^2\det \nabla \Theta(x',x_3){\rm d}{x}_3
				=&\Bigl(
				h
				+\frac{h^3}{12}(-\, {\rm K}_{y_0})
				+\frac{h^5}{80}(\, {\rm K}_{y_0}^2)
				\Bigr)\,\mathcal F_0({\rm I}_m)+\Bigl(
				-\frac{h^3}{3}\,{\rm\,{\rm H}_{y_0}}
				\Bigr)\,\mathcal F_0({\rm II}_m)
				\notag\\
				&+\Bigl(
				\frac{h^3}{12}
				-\frac{h^5}{80}\, {\rm K}_{y_0}
				\Bigr)\,\mathcal F_0({\rm III}_m)+\Bigl(
				-\frac{h^5}{40}\,{\rm H}_{y_0}\, {\rm K}_{y_0}
				\Bigr)\,\mathcal F_1({\rm I}_m) \\
				&+\Bigl(
				\frac{h^3}{6}
				-\frac{h^5}{40}\, {\rm K}_{y_0}
				\Bigr)\,\mathcal F_1({\rm II}_m)+\Bigl(
				\frac{h^3}{12}
				+\frac{h^5}{80}(4\, {\rm H}_{y_0}^2-\, {\rm K}_{y_0})
				\Bigr)\,\mathcal F_2({\rm I}_m) \notag\\
				&+\Bigl(
				\frac{h^5}{20}\,{\rm H}_{y_0}
				\Bigr)\,\mathcal F_2({\rm II}_m)
				+
				\frac{h^5}{80}
				\,\mathcal F_2({\rm III}_m)+\Bigl(h+
				\frac{h^3}{12}\, {\rm K}_{y_0}
				\Bigr),\notag
			\end{align}
		which provides an asymptotic direct approximation of the energy term $\|\nabla_\xi \varphi_\xi\|^2$ appearing in the three-dimensional Ciarlet-Geymonat energy.

We now turn to the approximation of the purely volumetric terms of the 
three-dimensional Ciarlet-Geymonat energy. 
To this end, we define
\begin{align}
{\rm a}_m:=\det(\nabla m|n_m),\qquad \qquad {\rm a}_{y_0}:=\det(\nabla {y_0}|n_{y_0})
\end{align}
and the differences between the curvature of the referential (undeformed) mid-surface defined by ${y_0}$ and the current (deformed) mid-surface defined by $m$
\begin{align}
\Delta{\rm H}_m :=\,{\rm H}_m -\,{\rm H}_{y_0}, \qquad \qquad \Delta{\rm K}_m :=\,{\rm K}_m -\,{\rm K}_{y_0}.
\end{align}

For all $x_3 \in \big(-\frac{h}{2}, \frac{h}{2}\,\big)$, we have
\begin{align}
\det \nabla_\xi\varphi
&=\,\frac{{\rm det}(\nabla m|n_m)}{{\rm det}(\nabla {y_0}|n_{y_0})}
\frac{\Big[1-2\,x_3\,{\rm H}_{m}\,+x_3^2 \,\,{\rm K}_{m}\Big]}
{\Big[1-2\,x_3\,{\rm H}_{y_0}\,+x_3^2 \,\,{\rm K}_{y_0}\Big]} .
\end{align}
Although $(\det \nabla_\xi\varphi_\xi)^2$ does not have a polynomial structure, 
it is a rational function, and therefore we may perform its Taylor expansion 
without destroying the structure of the energy  and to obtain it approximation up to order $O(x_3^4)$ as
\begin{align}
	(\det \nabla_\xi \varphi)^2
	=
	\left(\frac{{\rm a}_m}{{\rm a}_{y_0}}\right)^2 \Big(
	1
	&+ c_1 x_3
	+ c_2 x_3^2
	+ c_3 x_3^3
	+ c_4 x_3^4
	+ O(x_3^5)
	\Big),
\end{align}
where the coefficients are
\begin{align}
	c_1 &= -4\,\Delta{\rm H}_m,
	\qquad 
	c_2 = -8\,{\rm H}_{y_0} \Delta{\rm H}_m
	+ 4 (\Delta{\rm H}_m)^2
	+ 2 \Delta{\rm K}_m,\notag
	\\[0.5em]
	c_3 &= -16\,{\rm H}_{y_0}^2 \Delta{\rm H}_m
	+ 16\,{\rm H}_{y_0} (\Delta{\rm H}_m)^2
	+ 4\,{\rm K}_{y_0}\Delta{\rm H}_m
	+ 4\,{\rm H}_{y_0} \Delta{\rm K}_m
	- 4 \Delta{\rm H}_m \Delta{\rm K}_m,\notag
	\\[0.5em]
	c_4 &=
	-32\,{\rm H}_{y_0}^3 \Delta{\rm H}_m
	+ 48\,{\rm H}_{y_0}^2 (\Delta{\rm H}_m)^2
	+ 8\,{\rm H}_{y_0}^2 \Delta{\rm K}_m
	+ 16\,{\rm H}_{y_0}\,{\rm K}_{y_0} \Delta{\rm H}_m
	-16\,{\rm H}_{y_0} \Delta{\rm H}_m \Delta{\rm K}_m \\[0.5em]
	& \qquad-8\,{\rm K}_{y_0} (\Delta{\rm H}_m)^2
	-2\,{\rm K}_{y_0}\Delta{\rm K}_m
	+ (\Delta{\rm K}_m)^2.\notag
\end{align}

Then, after  multiplication with  the Jacobian $\det \nabla \Theta(x',x_3)={\rm det}(\nabla {y_0}|n_{y_0})\,b(x_3)$  and integration through the thickness we obtain 
\begin{align}
	\int_{-h/2}^{h/2} (\det \nabla_\xi \varphi)^2 \det \nabla \Theta(x',x_3)\,{\rm d}{x}_3
	= &\frac{{\rm a}_m^2}{{\rm a}_{y_0}}\Bigg[
	h
	+\frac{h^3}{12}\Big(\, {\rm K}_{y_0}+4(\Delta{\rm H}_m)^2+2\Delta{\rm K}_m\Big)+\frac{h^5}{80}\Big(
	16\, {\rm H}_{y_0}^2(\Delta{\rm H}_m)^2 \notag\\
	&-8\, {\rm H}_{y_0}\Delta{\rm H}_m\,\Delta{\rm K}_m
	-4\, {\rm K}_{y_0}\Delta{\rm H}_m)^2
	+(\Delta{\rm K}_m)^2
	\Big)
	+O(h^6)
	\Bigg],
\end{align}
which is the dimensional reduction of the pure volumetric  term 
$(\det \nabla_\xi \varphi_\xi)^2$ appearing in the three-dimensional 
Ciarlet-Geymonat energy. Let us anticipate and notice that, in a model including terms up to order ${\rm O}(h^5)$, the corresponding functional is lower semicontinuous, since the energy
is convex in the variables $({\rm a}_m\, {\rm H}_m,{\rm a}_m\, {\rm K}_m)$.
In contrast, if the model is truncated at order ${\rm O}(h^3)$, this property is lost, and, in this case, another approximation of the pure volumetric  term 
$(\det \nabla_\xi \varphi_\xi)^2$ should be considered. 

We now address the reduction of the volumetric term $-\log(\det F_\xi)$.
In contrast to $\det \nabla_\xi\varphi$, the quantity $-\log(\det F_\xi)$
does not possess a polynomial-type structure. A direct Taylor expansion
would therefore alter the favorable structure of the energy and lead to
a reduced model for which the lower semicontinuity of the internal shell
energy becomes unclear. For this reason, following the approach and the results of Anicic
\cite{anicic1999formulation,anicic2018polyconvexity,anicic2019existence},
we adopt a different strategy for the reduction of this term.
Although rigorous shell theories are often obtained through asymptotic
methods or $\Gamma$-convergence techniques, we approximate the thickness
integration of the three-dimensional energy term involving
$-\log(\det F_\xi)$ by means of a quadrature rule, a procedure commonly
employed in computational shell mechanics
\cite{hokkanen2020quadrature,cirak2000subdivision,feng2015finite}. More precisely, we will use Simpson's rule to approximate the corresponding three-dimensional contribution to the energy.

Although this approximation will not be used in the final model, we briefly present the dimensional reduction of the pure volumetric term
$\log(\det F_\xi)$ appearing in the three-dimensional
Ciarlet-Geymonat energy. This intermediate computation
is useful for understanding the structure of the reduced energy and
for assessing the lower semicontinuity properties of the resulting
functional. More precisely, we would have	
        \begin{align}
		\int_{-h/2}^{h/2} \log(\det F_\xi)\det \nabla \Theta(x',x_3)\,{\rm d}{x}_3
		&=
		{\rm a}_{y_0}\Bigg[
		h\,\log\!\left(\frac{{\rm a}_m}{{\rm a}_{y_0}}\right)
		+\frac{h^3}{12}\Bigg(
		\, {\rm K}_{y_0}\log\!\left(\frac{{\rm a}_m}{{\rm a}_{y_0}}\right)
		+\Delta{\rm K}_m -2(\Delta{\rm H}_m )^2
		\Bigg)\\[0.4em]
		&\qquad\quad
		+\frac{h^5}{80}\Bigg(
		-4(\Delta{\rm H}_m )^4
		-\frac{32}{3}\, {\rm H}_{y_0}(\Delta{\rm H}_m )^3
		+(2\, {\rm K}_{y_0}-8\, {\rm H}_{y_0}^2)(\Delta{\rm H}_m )^2\notag\\
		&\qquad\qquad\qquad\quad
		+4\, {\rm H}_{y_0}\Delta{\rm H}_m \,\Delta{\rm K}_m 
		+4(\Delta{\rm H}_m )^2\Delta{\rm K}_m 
		-\frac12(\Delta{\rm K}_m )^2\notag
		\Bigg)
		\Bigg].
	\end{align}
We note that the obtained energy is expressed in terms of the
differences $\, {\rm H}_m-\, {\rm H}_{y_0}$ and $\, {\rm K}_m-\, {\rm K}_{y_0}$. However, we are not able to show that these quantities weakly converge for weakly convergent sequences $m_k$ to the desired weak limits (see Remark \ref{sec3-subsec1-rem1}).
This indicates that performing a Taylor expansion of the
term $\log(\det F_\xi)$ leads to a loss of the favorable volumetric
structure of the energy, as well as of the weak convergence properties.

In contrast, these difficulties do not arise when Simpson's rule is used. Indeed, Simpson's integration rule leads to 
\begin{align}
		\int_{-h/2}^{h/2} \log(\det F_\xi)\det \nabla \Theta(x',x_3)\,{\rm d}{x}_3
\;\approx\;
\frac{h}{6}
\Big[&
 {\rm A}_{y_0}^-
(\log ({\rm a}_m\,{\rm A}_m^-) - \log ({\rm a}_{y_0}{\rm A}_{y_0}^-))\
 + 4(\log {\rm a}_m-\log {\rm a}_{y_0})
\\\notag
& + {\rm A}_{y_0}^+
(\log ({\rm a}_m\,{\rm A}_m^+) - \log ({\rm a}_{y_0}{\rm A}_{y_0}^+))
\Big],
\end{align}
where
\begin{equation}
{\rm A}_m^\pm
=
1 \mp h\,{\rm H}_m + \frac{h^2}{4}\,{\rm K}_m,
\qquad
{\rm A}_{y_0}^\pm
=
1 \mp h\,{\rm H}_{y_0} + \frac{h^2}{4}\,{\rm K}_{y_0}.
\end{equation}
This formula requires only three evaluations: on the midsurface 
and on the upper and lower faces of the shell. It is exact for all polynomials of degree $\le 3$, and its error
is of the order ${\rm O}(h^5)$.
We further remark that the reduced shell energy obtained from
$\log \det F_\xi$ by means of Simpson's rule can be expressed
in terms of the quantities  ${{\rm a}_m}$ and ${\rm a}_m\,{\rm A}_m^\pm$, which will make the functional
lower semicontinuous, see Anicic
\cite{anicic1999formulation,anicic2018polyconvexity,anicic2019existence}.

For the sake of a unified treatment, Simpson's rule may also be applied
to all volumetric terms, including $(\det F_\xi)^2$, leading to the
approximation
\begin{align}
	\int_{-h/2}^{h/2} (\det \nabla_\xi \varphi)^2 \det \nabla \Theta(x',x_3)\,{\rm d}{x}_3
	\approx &\, {\rm a}_{y_0}\Big[
{\rm A}_{y_0}^-\,\Big(\frac{{\rm a}_m{\rm A}_m^-}{{\rm a}_{y_0}{\rm A}_{y_0}^-}\Big)^2 +4\,\frac{{\rm a}_m^2}{{\rm a}_{y_0}^2}
 +{\rm A}_{y_0}^+\,\Big(\frac{{\rm a}_m{\rm A}_m^+}{{\rm a}_{y_0}{\rm A}_{y_0}^+}\Big)^2
\Big],
\end{align}

	The last step is the approximation of the external load potentials in the shell model. To this aim, we next perform direct integration over the thickness. Thus, from \eqref{defTheta} and \eqref{KLa} we find
$$
\tilde{v}(x)  =\varphi(x) - \Theta(x)= (m-y_0) + x_3 (n_m -n_{y_0}).
$$
We insert this into the external load potentials to obtain the simplified form
\begin{equation}\label{e2o}
\begin{array}{l}
\dd\int_{\Omega_h} \bigl\langle  \tilde{f}, \tilde{v} \bigr\rangle   \, {\rm d}{x}\, \,=\, 
\int_{\omega} \left( \bigl\langle \int_{-h/2}^{h/2} \tilde{f}\,{\rm d}{x}_3 ,   m-y_0 \bigr\rangle    + 
 \bigl\langle \int_{-h/2}^{h/2} x_3 \tilde{f}\,{\rm d}{x}_3 ,  n_m-n_{y_0} \bigr\rangle   \right) {\rm d}{x}'\,.
\end{array}
\end{equation}
Denoting with $ \tilde{t}^{\pm}(x'):\,=\, \tilde{t}(x', \pm \frac{h}{2} ) $ and taking into account that $ \Gamma_t \,=\,  \Big(\omega\times \Big\{\frac{h}{2}\Big\} \Big) \cup \Big(\omega\times \Big\{-\frac{h}{2}\Big\} \Big) \cup \Big(\gamma_t\times (-\frac{h}{2} , \frac{h}{2})\Big) $, we obtain similarly
\begin{align} 
\dd\int_{\Gamma_t} \bigl\langle  \tilde{t}, \tilde{v} \bigr\rangle   \, {\rm d}{x}  \,=\, &\int_{\omega}  \bigl\langle  \tilde{t}^{\pm},  (m-y_0)  \pm
\frac{h}{2}  (n_m-n_{y_0}) \bigr\rangle   
\,{\rm d}{x}'
 + \int_{\gamma_t} \int_{-h/2}^{h/2}  \bigl\langle  \,\tilde{t}, (m-y_0)  + 
x_3  (n_m-n_{y_0}) \bigr\rangle   \,
{\rm d}{x}_3\,{\rm d}{x}'\,,
\end{align}
and
\begin{align} \label{e3o}
\dd\int_{\Gamma_t} \bigl\langle  \tilde{t},  \tilde{v} \bigr\rangle   \, {\rm d}{x}  \,=\, & \int_{\omega}  \bigl\langle  \tilde{t}^{+} + \tilde{t}^{-}  ,   m-y_0  \bigr\rangle    \,{\rm d}{x}'
+ 
\int_{\omega}  \bigl\langle  
\frac{h}{2}  (\tilde{t}^{+} - \tilde{t}^{-}), n_m-n_{y_0}   \bigr\rangle   \,
{\rm d}{x}'  \notag\\
& + \int_{\gamma_t}   \bigl\langle  \int_{-h/2}^{h/2} \tilde{t}\,{\rm d}{x}_3 ,  m-y_0  \bigr\rangle   \,{\rm d}{x}' + 
\int_{\gamma_t}   \bigl\langle  \int_{-h/2}^{h/2} x_3\tilde{t}\,{\rm d}{x}_3 ,   n_m- n_{y_0}  \bigr\rangle   \,{\rm d}{x}'.
\end{align}
With \eqref{e2o} and \eqref{e3o}, the potential of external applied loads in the three-dimensional model can be written in the form
\begin{align}\label{e4o}
\mathcal{L}(m,n_m)\,=\,  \Pi_\omega(m,n_m) + \Pi_{\gamma_t}(m,n_m)\,,
\end{align}
with
\begin{align}
\Pi_\omega(m,n_m) \,=\, \dd\int_{\omega} \bigl\langle  \bar{f},  v \bigr\rangle   \, {\rm d}{x}' + \Lambda_\omega(n_m-n_{y_0}),\qquad 
\Pi_{\gamma_t}(m,n_m)\,=\, \dd\int_{\gamma_t} \bigl\langle  \bar{t},  v \bigr\rangle   \, {\rm d}{x}'+ \Lambda_{\gamma_t}(n_m-n_{y_0})\,,
\end{align}
where $ v(x_1,x_2) \,=\, m(x_1,x_2)-y_0(x_1,x_2) $ is the displacement vector of the midsurface and
\begin{align} \label{e5o}
\bar f & \,=\, \dd \int_{-h/2}^{h/2} \tilde{f}\,{\rm d}{x}_3 + (\tilde{t}^{+} + \tilde{t}^{-}), \qquad  \qquad  \qquad  \qquad  \qquad  \quad  \ 
\bar t \,=\, \int_{-h/2}^{h/2} \tilde{t}\,{\rm d}{x}_3 \,,  \\
\Lambda_\omega(n_m) & \,=\, \int_{\omega}  \bigl\langle  \dd \int_{-h/2}^{h/2} x_3\,\tilde{f}\,{\rm d}{x}_3 +
\frac{h}{2}  (\tilde{t}^{+} - \tilde{t}^{-}), n_m-n_{y_0}  \bigr\rangle   \,
{\rm d}{x}', \quad 
\Lambda_{\gamma_t}(n_m)  \,=\, \int_{\gamma_t}  \bigl\langle  \dd \int_{-h/2}^{h/2} x_3\,\tilde{t}\,{\rm d}{x}_3 , n_m-n_{y_0} \bigr\rangle   \,
{\rm d}{x}'.
\notag
\end{align}

\subsection{The proposed models}

Before summarizing the resulting models, we mention that all constitutive
coefficients are derived directly from the three-dimensional formulation,
without any a posteriori fitting of two-dimensional constitutive parameters. Moreover, for simplicity, we will also impose the following boundary conditions for the deformation of the midsurface $m$
\begin{align}
	m\mid_{\gamma _{y_0}}\,=\,y_0\qquad  \text{and } \qquad n_m\mid_{\gamma_{y_0}} \,= n_{y_0},
\end{align}
where $\gamma_{y_0}$ is a nonempty relatively open subset of $\partial\omega$. The potential of the external applied loads $\overline{\Pi}(m,n_m)$ is the same as that obtained in the previous subsection. All quantities appearing in the formulation of the model have been introduced in the dimensional reduction process described in the previous subsections.

    \subsubsection{Model I up to order ${\rm O}(h^5)$ }
    
	Collecting the results of the previous subsections, we arrive at the following two-dimensional minimization problem for the deformation of the middle surface $m:\omega \to \mathbb{R}^3$. Let $\omega \subset\mathbb{R}^2$, we consider the variational problem of minimizing the functional	
    \begin{equation}
    \label{model1}
		\mathcal{J}_1(m)
        :=  \int_{\omega} \Big[
		W^{(1)}_{\mathrm{shell}}\big( {\rm I}_{m}, {\rm II}_{m}, {\rm III}_{m}\big)+W_{\rm curv}({{\rm a}_m}, {{\rm a}_m} {\rm A}_m^\pm)\Big]\ \,{\rm det}(\nabla {y_0}|n_{y_0})       \,\mathrm{d} x' - \mathcal{L}(m,n_m)\,,
	\end{equation}
	where 
	\begin{align}
    \label{wshell1}
			W^{(1)}_{\mathrm{shell}}\big( {\rm I}_{m}, {\rm II}_{m}, {\rm III}_{m}\big):=&\,\frac{\mu}{2}\Big\{\Bigl(
			h+\frac{h^3}{12}(-\, {\rm K}_{y_0})
			+\frac{h^5}{80}(\, {\rm K}_{y_0}^2)
			\Bigr)\,\mathcal F_0({\rm I}_{m})+\Bigl(
			-\frac{h^3}{3}\,{\rm H}_{y_0}
			\Bigr)\,\mathcal F_0({\rm II}_{m}) \notag\\
			&+\Bigl(
			\frac{h^3}{12}
			-\frac{h^5}{80}\, {\rm K}_{y_0}
			\Bigr)\,\mathcal F_0({\rm III}_{m})+\Bigl(
			-\frac{h^5}{40}\,{\rm H}_{y_0}\, {\rm K}_{y_0}
			\Bigr)\,\mathcal F_1({\rm I}_{m}) \\
			&+\Bigl(
			\frac{h^3}{6}
			-\frac{h^5}{40}\, {\rm K}_{y_0}
			\Bigr)\,\mathcal F_1({\rm II}_{m})+\Bigl(
			\frac{h^3}{12}
			+\frac{h^5}{80}(4\, {\rm H}_{y_0}^2-\, {\rm K}_{y_0})
			\Bigr)\,\mathcal F_2({\rm I}_{m}) \notag\\
			&+\Bigl(
			\frac{h^5}{20}\,{\rm H}_{y_0}
			\Bigr)\,\mathcal F_2({\rm II}_{m})
			+\Bigl(
			\frac{h^5}{80}
			\Bigr)\,\mathcal F_2({\rm III}_{m}) +\Bigl(h+
				\frac{h^3}{12}\, {\rm K}_{y_0}
				\Bigr)\Big\}-\frac{3\,\mu}{2},\notag
    \end{align}
    and
    \begin{align}
    \label{wcurv1}   
	W_{\rm curv}({{\rm a}_m}, {{\rm a}_m} {\rm A}_m^\pm):=-\frac{\lambda+2\,\mu}{4} \Bigg[\frac{h}{6}\Big[&{\rm A}_{y_0}^-
\big( \log ({\rm a}_m{\rm A}_m^-) - \log ({\rm a}_{y_0}{\rm A}_{y_0}^-)
\big) \notag\\
 &+ 4\log \frac{{\rm a}_m}{{\rm a}_{y_0}}
 + {\rm A}_{y_0}^+
\big( \log ({\rm a}_m {\rm A}_m^+) - \log ({\rm a}_{y_0} {\rm A}_{y_0}^+)
\big)
\Bigg] \\
			&+\frac{\lambda}{4} \Big\{{\rm a}_{y_0}\Big[
{\rm A}_{y_0}^-\,\Big(\frac{{\rm a}_m{\rm A}_m^-}{{\rm a}_{y_0}{\rm A}_{y_0}^-}\Big)^2 +4\,\frac{{\rm a}_m^2}{{\rm a}_{y_0}^2}
 +{\rm A}_{y_0}^+\,\Big(\frac{{\rm a}_m{\rm A}_m^+}{{\rm a}_{y_0}{\rm A}_{y_0}^+}\Big)^2
\Big]-\frac{\lambda}{4}.\notag
	\end{align}

	 \subsubsection{Model II up to order ${\rm O}(h^3)$ }

    Unlike the previous model, here we retain only terms up to order ${\rm O}(h^3)$ in the energy density. The resulting model is a minimization problem with respect to the following functional	
    \begin{equation}
    \label{model2}
		\mathcal{J}_2(m)\,=\, \int_{\omega}    \, \Big[  \,
		W^{(2)}_{\mathrm{shell}}\big( {\rm I}_{m}, {\rm II}_{m}, {\rm III}_{m}\big)+W_{\rm curv}({{\rm a}_m}, {{\rm a}_m} {\rm A}_m^\pm)\Big]\ \,{\rm det}(\nabla {y_0}|n_{y_0})       \,\mathrm{d} x' - \mathcal{L}(m,n_m)\,,
	\end{equation}
	where 
	\begin{align}
    \label{wshell2}
			W^{(2)}_{\mathrm{shell}}\big( {\rm I}_{m}, {\rm II}_{m}, {\rm III}_{m}\big)=&\,\frac{\mu}{2}\Big\{\Bigl(
			h
			+\frac{h^3}{12}(-\, {\rm K}_{y_0})
			\Bigr)\,\mathcal F_0({\rm I}_{m})+\Bigl(
			-\frac{h^3}{3}\,{\rm H}_{y_0}
			\Bigr)\,\mathcal F_0({\rm II}_{m})
			+\Bigl(
			\frac{h^3}{12}
			\Bigr)\,\mathcal F_0({\rm III}_{m})\notag\\[0.6em]
			&\quad
			+\Bigl(
			\frac{h^3}{6}
			\Bigr)\,\mathcal F_1({\rm II}_{m})+\Bigl(
			\frac{h^3}{12}
			\Bigr)\,\mathcal F_2({\rm I}_{m})+\Bigl(
			\frac{h^3}{6}\, {\rm K}_{y_0}
			\Bigr)\Big\}-\frac{3\,\mu}{2},
    \end{align}
    and $W_{\rm curv}({{\rm a}_m}, {{\rm a}_m} {\rm A}_m^\pm)$ is given as in \eqref{wcurv1}.
	
\subsubsection{Model III up to order ${\rm O}(h^5)$ }

Again, we would like to have a model which retains terms up to order ${\rm O}(h^5)$. In particular, we present another variant of the model, in which the exact Taylor expansion is used for the volumetric term $(\det F_\xi)^2$
instead of Simpson's rule. The corresponding model is given by the
following variational formulation: minimize with respect to $m$ the
functional
	\begin{align}
    \label{model3}
		\mathcal{J}_3(m)\,:= &\, \int_{\omega}    \, \Big[  \,
		W^{(1)}_{\mathrm{shell}}\big( {\rm I}_{m}, {\rm II}_{m}, {\rm III}_{m}\big)+W_{\rm curv}^{(1)}({{\rm a}_m}, {{\rm a}_m} {\rm A}_m^\pm) \notag\\
        & \qquad +W_{\rm curv}^{(2)}({{\rm a}_m}, {\rm a}_m\Delta \,{\rm H}_{m}, {\rm a}_m\Delta \,{\rm K}_{m})\Big]\ \,{\rm det}(\nabla {y_0}|n_{y_0})       \,\mathrm{d} x'  - \mathcal{L}(m,n_m)\,,
	\end{align}
	where $W^{(1)}_{\mathrm{shell}}\big( {\rm I}_{m}, {\rm II}_{m}, {\rm III}_{m}\big)$ is defined as in \eqref{wshell1},
    \begin{align}
    \label{wcurv3}   
	W_{\rm curv}^{(1)}({{\rm a}_m}, {{\rm a}_m} {\rm A}_m^\pm):=-\frac{\lambda+2\,\mu}{4} \Bigg[\frac{h}{6}\Big[&{\rm A}_{y_0}^-
\big( \log ({\rm a}_m{\rm A}_m^-) - \log ({\rm a}_{y_0}{\rm A}_{y_0}^-)
\big) \notag\\
 &+ 4\log \frac{{\rm a}_m}{{\rm a}_{y_0}}
 + {\rm A}_{y_0}^+
\big( \log ({\rm a}_m {\rm A}_m^+) - \log ({\rm a}_{y_0} {\rm A}_{y_0}^+)
\big)
\Bigg],
	\end{align}
    and
	\begin{align}
    \label{wcurv4}	
		W_{\text{curv}}^{(2)}({\rm a}_m, {\rm a}_m \Delta{\rm H}_m, {\rm a}_m \Delta{\rm K}_m) := &  \frac{\lambda}{4} \Bigg\{ {\rm a}_{y_0} \left( \frac{{\rm a}_m}{{\rm a}_{y_0}} \right)^2 \Bigg[ h + \frac{h^3}{12} (\, {\rm K}_{y_0} + 4(\Delta{\rm H}_m)^2 + 2\Delta{\rm K}_m) \\
        & + \frac{h^5}{80} \Big( 16\, {\rm H}_{y_0}^2 (\Delta{\rm H}_m)^2 - 8\, {\rm H}_{y_0} \Delta{\rm H}_m \Delta{\rm K}_m  - 4\, {\rm K}_{y_0} (\Delta{\rm H}_m)^2 + (\Delta{\rm K}_m)^2 \Big)\Bigg] -1\Bigg\}.\notag
	\end{align}
	
	  \section{The existence results }\setcounter{equation}{0}
    \label{sec3}
    \subsection{A general existence theorem}
    \label{sec3-subsec1}
    In this subsection, we present a general existence theorem that will be used to establish the existence results for our proposed models. This existence theorem was established by Anicic \cite{anicic2019existence, anicic2018polyconvexity}, which was also used in papers \cite{giang2024existence, giang2025existence}. To begin with, we have the following lemma.
    \begin{lemma}
    \label{limit-identification}
    Let $\omega \subset \mathbb{R}^2$ be an open bounded domain with Lipschitz boundary. Let $y_0 \in C^2(\bar{\omega}, \mathbb{R}^3)$ be such that $\partial_{x_1} y_0(x')$ and $\partial_{x_2} y_0(x')$ are linearly independent for every $x' \in \bar{\omega}$ and let $\gamma _{y_0}$ be a nonempty relatively open subset of $\partial \omega$. For $h > 0$, $p \geq 2$ and $q > 1$, where $h$ satisfies \eqref{ch5in}, we define
    \begin{align}
    V^h := \bigg\{ m \in W^{1,p}(\omega; \mathbb{R}^3)\,\big| &\,  {\rm a}_m \in L^q(\omega), \,\  {\rm a}_m > 0 \text{ a.e. in } \omega, \notag\\
    & n_m \in W^{1,p}(\omega; \mathbb{R}^3), \quad \dfrac{h}{2}\max\{ 1/R_1(m), \, 1/R_2(m) \}  < 1 \text{ a.e. in } \omega, \\
    & m = y_0  \quad \text{and} \quad n_m = n_{y_0} \, ds\text{-a.e. in } \gamma_{y_0} \bigg\}, \notag
    \end{align}
    where $R_1(m)$ and $R_2(m)$ denote the principal radii of curvature\footnote{ The condition $\dfrac{h}{2}\max\{ 1/R_1(m), \, 1/R_2(m) \}  < 1 \text{ a.e. in } \omega$ implies that $A_m^\pm>0$ a.e. in $\omega$} of the deformed surface $m(\omega)$.
    
    Assume that $(m_k)$ is a sequence with $m_k \in V^h$ for all $k$, for which there exist $m \in W^{1,p}(\omega; \mathbb{R}^3)$, $\varkappa \in W^{1,p}(\omega; \mathbb{R}^3)$, $(\xi_1, \xi_2, \xi_3) \in (L^q(\omega; \mathbb{R}^3))^3$ and $(\alpha_1, \alpha_2, \alpha_3) \in (L^q(\omega))^3$ such that
    \begin{align}
    \begin{array}{rcllrcll}
    m_k &\rightharpoonup& m &\text{ in } W^{1,p}(\omega; \mathbb{R}^3), &n_{m_k} &\rightharpoonup& \varkappa &\text{ in } W^{1,p}(\omega; \mathbb{R}^3),\vspace{2mm}
    \\
    \partial_1 m_k \wedge \partial_2 m_k &\to& \xi_1 &\text{ in } L^q(\omega; \mathbb{R}^3), & {\rm a}_{m_k} &\to& \alpha_1 & \text{ in } L^q(\omega),\vspace{2mm}
    \\
    {\rm H}_{m_k} \partial_1 m_k \wedge \partial_2 m_k &\rightharpoonup& \xi_2 & \text{ in } L^q(\omega; \mathbb{R}^3), & {\rm H}_{m_k} {\rm a}_{m_k} &\rightharpoonup& \alpha_2 & \text{ in } L^q(\omega),\vspace{2mm}
    \\
    {\rm K}_{m_k} \partial_1 m_k \wedge \partial_2 m_k &\rightharpoonup& \xi_3 & \text{ in } L^q(\omega; \mathbb{R}^3), & {\rm K}_{m_k} {\rm a}_{m_k} &\rightharpoonup& \alpha_3 &\text{ in } L^q(\omega).
    \end{array}
    \end{align}
    Assume further that
    $$
    {\rm a}_m > 0 \quad \textrm{ a.e. in } \quad \omega.
    $$
    Then, almost everywhere in $\omega$, we have   
   \begin{align}
    \varkappa &= n_m, \quad  \qquad \qquad \ \ \ \,\qquad\xi_1 = \partial_1 m \wedge \partial_2 m,\notag
   \\
    \xi_2 &=\,{\rm H}_m \partial_1 m \wedge \partial_2 m, \quad \quad \quad \, \xi_{3} =\,{\rm K}_m \partial_1 m \wedge \partial_2 m,
   \\
    \alpha_1 &= {\rm a}_m, \quad \quad\alpha_2 =\,{\rm H}_m {\rm a}_m \quad\quad \text{and} \quad\quad \alpha_3 =\,{\rm K}_m {\rm a}_m.\notag
    \end{align}
    \end{lemma}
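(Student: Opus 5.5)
We identify each limit separately, using the algebraic identities that hold pointwise for every $m_k\in V^h$ together with compactness (Rellich) and the fact that products of a strongly and a weakly convergent sequence converge weakly. For $m_k\in V^h$ we have $\partial_1 m_k\wedge\partial_2 m_k={\rm a}_{m_k}n_{m_k}$ with ${\rm a}_{m_k}=\|\partial_1 m_k\wedge \partial_2 m_k\|$. Moreover, since $\partial_\alpha n_{m_k}$ is tangent and ${\rm L}_{m_k}$ is the shape operator,
\begin{align*}
\partial_1 n_{m_k}\wedge \partial_2 n_{m_k}={\rm K}_{m_k}\,\partial_1 m_k\wedge\partial_2 m_k,\qquad \partial_1 m_k\wedge \partial_2 n_{m_k}+\partial_1 n_{m_k}\wedge\partial_2 m_k=-2\,{\rm H}_{m_k}\,\partial_1 m_k\wedge\partial_2 m_k .
\end{align*}
The sign depends on the convention ${\rm II}=-\nabla m^T\nabla n$. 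Taking the scalar product with $n_{m_k}$ gives ${\rm K}_{m_k}{\rm a}_{m_k}$ and $-2{\rm H}_{m_k}{\rm a}_{m_k}$.

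\textbf{Step 1.} Since $p\ge 2$, Rellich gives $n_{m_k}\to\varkappa$ strongly in $L^r$ for all $r<\infty$ (in $L^\infty$ if $p>2$), and, after passing to a subsequence, a.e. Because $|n_{m_k}|=1$, we get $|\varkappa|=1$ a.e. Also $\xi_1$ is the strong $L^q$ limit of $\partial_1 m_k\wedge \partial_2 m_k$, so a subsequence converges a.e. Then ${\rm a}_{m_k}=|\partial_1m_k\wedge\partial_2m_k|\to|\xi_1|$ a.e., which gives $\alpha_1=|\xi_1|$. From $\partial_1 m_k\wedge\partial_2 m_k={\rm a}_{m_k}n_{m_k}$ we obtain $\xi_1=\alpha_1\varkappa$ a.e.

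\textbf{Step 2 (main obstacle).} We must show $\xi_1=\partial_1 m\wedge\partial_2 m$, i.e. identify the strong limit of the nonlinear minors with the minors of the weak limit. For $p>2$ this is standard weak continuity of $2\times 2$ minors: write $\partial_1 m\wedge\partial_2 m$ in divergence form, $\partial_1(m\wedge \partial_2 m)-\partial_2(m\wedge\partial_1 m)$, and pass to the limit in distributions, using that $m_k\to m$ strongly in $L^\infty$ or $L^{r}$ with $r$ large. For $p=2$ this argument is borderline. There, I would exploit that $\partial_1m_k\wedge\partial_2m_k$ already converges strongly in $L^q$ with $q>1$, so the distributional limit still equals $\xi_1$. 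The product $m_k\wedge\partial_2 m_k$ converges in distributions because $m_k\to m$ strongly in $L^{r}$ for all $r<\infty$ while $\partial_2m_k\rightharpoonup\partial_2 m$ in $L^2$; this only requires $r\ge2$. So the divergence-form argument works for $p\geq2$.

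Combining with Step 1 and ${\rm a}_m>0$, we get $\alpha_1=|\xi_1|={\rm a}_m$ and $\varkappa=\xi_1/\alpha_1=n_m$ a.e.

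\textbf{Step 3.} For the curvature terms, use the identities above: ${\rm K}_{m_k}\partial_1m_k\wedge\partial_2m_k=\partial_1 n_{m_k}\wedge\partial_2 n_{m_k}$. By the same weak continuity of minors applied to $n_{m_k}\rightharpoonup n_m$ in $W^{1,p}$, this converges in distributions to $\partial_1 n_m\wedge\partial_2 n_m$, which equals ${\rm K}_m\partial_1m\wedge\partial_2m$ since $m$ and $n_m$ are regular enough for the identity to hold a.e. Hence $\xi_3={\rm K}_m\partial_1m\wedge\partial_2m$. The mixed minors $\partial_1 m_k\wedge\partial_2 n_{m_k}+\partial_1 n_{m_k}\wedge\partial_2 m_k$ are treated the same way, writing them as $\partial_1(m_k\wedge\partial_2 n_{m_k})-\partial_2(m_k\wedge\partial_1 n_{m_k})$. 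This gives $\xi_2={\rm H}_m\partial_1m\wedge\partial_2m$.

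Finally, ${\rm K}_{m_k}{\rm a}_{m_k}=\langle {\rm K}_{m_k}\partial_1m_k\wedge\partial_2m_k,n_{m_k}\rangle$ is a weakly convergent sequence paired with a strongly (and boundedly) convergent one. Since $|n_{m_k}|=1$ and $n_{m_k}\to n_m$ a.e., dominated convergence against $L^{q'}$ test functions shows that it converges weakly to $\langle\xi_3,n_m\rangle={\rm K}_m{\rm a}_m$. Thus $\alpha_3={\rm K}_m{\rm a}_m$, and likewise $\alpha_2={\rm H}_m{\rm a}_m$. Uniqueness of limits removes the subsequence restriction.
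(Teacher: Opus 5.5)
Your proof is correct and takes essentially the route the paper defers to (part (iii) of Anicic's proof, i.e. compensated compactness): weak continuity in $\mathcal{D}'$ of the $2\times 2$ minors of $(\nabla m_k,\nabla n_{m_k})$, the pointwise identities expressing $\partial_1 n\wedge\partial_2 n$ and the mixed minors through ${\rm K}$ and ${\rm H}$, and pairing of the weakly convergent products with the strongly convergent unit normals. One harmless slip: $\partial_1(m\wedge\partial_2 m)-\partial_2(m\wedge\partial_1 m)=2\,\partial_1 m\wedge\partial_2 m$, so your divergence form of the first minor is missing a factor $\tfrac12$; your mixed-minor identity is correct as written.
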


    \begin{proof}
    The proof follows from the same argument as in part (iii) of the proof of \cite[Theorem 1]{anicic2018polyconvexity}, and for this reason, the details will be omitted. The key ingredient of the proof is the compensated compactness argument, which is by now classical in nonlinear elasticity.
    \end{proof}
\newpage

    \begin{remark}
    \label{sec3-subsec1-rem1}
    \begin{itemize}
    \item[]
    \item[(i)] In the previous lemma, we are able to show that if $m_k \rightharpoonup m$, then
    $$
    {\rm H}_{m_k} {\rm a}_{m_k} \rightharpoonup\,{\rm H}_m {\rm a}_m \qquad\textrm{ and } \qquad{\rm K}_{m_k} {\rm a}_{m_k} \rightharpoonup\,{\rm K}_m {\rm a}_m.
    $$
    However, up to now, there is no similar argument to show that
    $$
    {\rm H}_{m_k} \rightharpoonup\,{\rm H}_m\qquad \textrm{ and }\qquad {\rm K}_{m_k} \rightharpoonup\,{\rm K}_m.
    $$
    Therefore, to establish our existence results, we will need to check the convexity of energetic terms with respect to $\, {\rm H}_m {\rm a}_m$ and $\, {\rm K}_m {\rm a}_m$ instead of $\, {\rm H}_m$ and $\, {\rm K}_m$.
    \item[(ii)] The following is a direct consequence of Lemma \ref{limit-identification} under the same assumptions:
    $$
     {\rm a}_{m_k}\Delta{\rm H}_{m_k} \rightharpoonup {\rm a}_m \Delta{\rm H}_m\qquad \textrm{ and } \qquad {\rm a}_{m_k} \Delta{\rm K}_{m_k}  \rightharpoonup {\rm a}_m\,{\rm K}_m.
    $$
    \end{itemize}
    
    \end{remark}

    We are now in a position to state a general existence result.

    \begin{theorem}
    \label{general-existence}
      Let $\omega \subset \mathbb{R}^2$ be an open bounded domain with Lipschitz boundary. Let $y_0 \in C^2(\bar{\omega}, \mathbb{R}^3)$ be such that $\partial_{x_1} y_0(x')$ and $\partial_{x_2} y_0(x')$ are linearly independent for every $x' \in \bar{\omega}$ and let $\gamma _{y_0}$ be a nonempty relatively open subset of $\partial \omega$. For $h > 0$, $p \geq 2$ and $q > 1$, where $h$ satisfies \eqref{ch5in}, we define the set of admissible deformations $V^h$ as in Lemma \ref{limit-identification}, and for each $m \in V^h$,
    $$
    \mathcal{I}(m) := \int_\omega W(x', m) \, {\rm d}{x}' - L(m, n_m),
    $$
    where $L$ is a continuous linear form over the space $W^{1,p}(\omega; \mathbb{R}^3) \times W^{1,p}(\omega; \mathbb{R}^3)$ and $W : \omega \times V^h \to \mathbb{R}$ is a function with the following properties:
    \begin{enumerate}
    \item \textbf{Polyconvexity:} For almost all $x' \in \omega$, there exists a convex function $\mathbb{W}(x', \cdot) : \mathbf{M} \to \mathbb{R}$ where
    $$
    \mathbf{M} := \left\{ (A, B, a, b, c) \in (\mathbb{M}^{3 \times 2})^2 \times \mathbb{R}^3; \, a - \dfrac{h}{2}|b| > 0 \text{ and } a - h|b| + \frac{h^2}{4}c > 0 \right\}
    $$
    such that for almost all $x \in \omega$
    $$
    W(x', m) = \mathbb{W}(x', \nabla m(x'), \nabla n_m(x'), (1,\,{\rm H}_m(x'),\,{\rm K}_m(x')) {\rm a}_m(x')).
    $$
    \item \textbf{Measurability:} The function $\mathbb{W}(\cdot, A, B, a, b, c) : \omega \to \mathbb{R}$ is measurable for all $(A, B, a, b, c) \in \mathbf{M}$.
    \item \textbf{Coerciveness:} There exist constants $C_1 > 0$ and $C_2$ such that
    $$
    W(x', m) \geq C_1 \{ |\nabla m|^p + |\nabla n_m|^q + {\rm a}_m^{q} \} + C_2
    $$
    for all $\psi \in V^\varepsilon$ and almost all $x' \in \omega$.
    \item \textbf{Orientation-preserving condition:}
    $$
    W(x', m) \to +\infty \quad\text{ as }\quad {\rm a}_m(x') \to 0^+,
    $$
    $$
    W(x', m) \to +\infty\quad \text{ as }\quad {\rm A}_m^+ (x') \to 0^+,
    $$
    and
    $$
    W(x', m) \to +\infty\quad \text{ as } \quad {\rm A}_m^-(x') \to 0^+
    $$
    for all $m \in V^h$ and almost all $x' \in \omega$.
    \end{enumerate}

    Assume that $\inf_{m \in V^h} \mathcal{I}(m) < +\infty$, then there exists at least one function $m^* \in V^\varepsilon$ such that
    $$
    \mathcal{I}(m^*) = \inf_{m \in V^h} \mathcal{I}(m).
    $$
    \end{theorem}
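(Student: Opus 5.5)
We argue by the direct method of the calculus of variations, with Lemma \ref{limit-identification} supplying the identification of weak limits. Take a minimizing sequence $(m_k)\subset V^h$ with $\mathcal{I}(m_k)\to \inf_{V^h}\mathcal{I}<+\infty$. The coercivity hypothesis gives
\[
\mathcal{I}(m_k)\geq C_1\int_\omega\{|\nabla m_k|^p+|\nabla n_{m_k}|^q+{\rm a}_{m_k}^q\}\,{\rm d}x'+C_2|\omega|-L(m_k,n_{m_k}).
\]
The linear part is controlled as follows. Since $m_k=y_0$ on $\gamma_{y_0}$, a Poincaré inequality bounds $\|m_k\|_{W^{1,p}}$ by $\|\nabla m_k\|_{L^p}$ plus a constant. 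Since $|n_{m_k}|=1$, the functions $n_{m_k}$ are bounded in $L^\infty$. Hence the linear term grows at most linearly and is absorbed, and we get uniform bounds on $m_k$ in $W^{1,p}$, on $n_{m_k}$ in $W^{1,q}$ (in $W^{1,p}$ if $q\geq p$, or else we work in the weaker space throughout), and on ${\rm a}_{m_k}$ in $L^q$.

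The next task is to bound the curvature products. Write $|\partial_1 m\wedge\partial_2 m|={\rm a}_m$. Using ${\rm H}_m{\rm a}_m$ and ${\rm K}_m{\rm a}_m$, expressed through $\nabla m$ and $\nabla n_m$ (for instance ${\rm K}_m\,\partial_1m\wedge\partial_2m=\partial_1 n_m\wedge\partial_2 n_m$, and ${\rm H}_m\,\partial_1 m\wedge \partial_2 m$ is a bilinear combination of $\nabla m$ and $\nabla n_m$), together with the pointwise constraint $\tfrac h2\max|\kappa_i(m_k)|<1$, we obtain $|{\rm H}_{m_k}|{\rm a}_{m_k}\leq \tfrac{2}{h}{\rm a}_{m_k}$ and $|{\rm K}_{m_k}|{\rm a}_{m_k}\leq \tfrac{4}{h^2}{\rm a}_{m_k}$. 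These products are therefore bounded in $L^q$. Extracting subsequences yields weak limits $m,\varkappa,\xi_i,\alpha_i$.

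The main obstacle is that Lemma \ref{limit-identification} asks for \emph{strong} convergence of $\partial_1m_k\wedge\partial_2m_k$ and ${\rm a}_{m_k}$. I would first use the compensated-compactness (div--curl) structure of $2\times2$ minors to get weak convergence of the Jacobian-type products to $\partial_1 m\wedge\partial_2 m$. Then I would follow Anicic: set ${\rm a}_{m_k}=\langle \partial_1m_k\wedge\partial_2m_k,n_{m_k}\rangle$, where $n_{m_k}\to\varkappa$ strongly in $L^r$ by Rellich, and obtain strong convergence by a Mazur or convexity argument. Failing that, we would only use weak convergence in the identification, as in part (iii) of \cite[Theorem 1]{anicic2018polyconvexity}. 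We must also show ${\rm a}_m>0$ a.e. From the orientation-preserving blow-up and the bound $\int W<\infty$, the set where ${\rm a}_m=0$ has measure zero, by a Fatou-type argument applied to $\mathbb{W}$ along the sequence. The same argument gives ${\rm A}_m^\pm>0$, hence the curvature constraint for the limit. Lemma \ref{limit-identification} then identifies $\varkappa=n_m$ and the limits $\alpha_2={\rm H}_m{\rm a}_m$, $\alpha_3={\rm K}_m{\rm a}_m$. The boundary conditions pass to the limit by continuity of the trace, so $m\in V^h$.

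Finally, lower semicontinuity follows from polyconvexity. The arguments $(\nabla m_k,\nabla n_{m_k},{\rm a}_{m_k},{\rm H}_{m_k}{\rm a}_{m_k},{\rm K}_{m_k}{\rm a}_{m_k})$ converge weakly in $L^1$ to the corresponding quantities of $m$. Since $\mathbb{W}(x',\cdot)$ is convex, and lower semicontinuous as an extended-valued function set to $+\infty$ off $\mathbf{M}$, the orientation condition makes this extension consistent. Measurability in $x'$ and the lower bound $C_2$ then let us apply the classical Ioffe/De Giorgi theorem, giving
\[
\int_\omega W(x',m)\,{\rm d}x'\leq\liminf_{k}\int_\omega W(x',m_k)\,{\rm d}x'.
\]
The load term $L(m_k,n_{m_k})\to L(m,n_m)$ by weak continuity of continuous linear forms. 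Thus $\mathcal{I}(m)\leq\liminf\mathcal{I}(m_k)=\inf_{V^h}\mathcal{I}$, and $m^*:=m$ is a minimizer.
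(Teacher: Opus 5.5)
Your outline is essentially the paper's argument, which just defers to Anicic: coercivity and the pointwise curvature constraint give $L^q$ bounds on ${\rm a}_{m_k}$, ${\rm H}_{m_k}{\rm a}_{m_k}$ and ${\rm K}_{m_k}{\rm a}_{m_k}$, compensated compactness and Lemma~\ref{limit-identification} identify the weak limits, and convexity of $\mathbb{W}$ gives lower semicontinuity. Drop the attempted upgrade to strong convergence of ${\rm a}_{m_k}$ via Mazur (Mazur gives strong convergence only of convex combinations, and ${\rm a}_{m_k}$ need not converge strongly); your fallback is the correct route, because $\partial_1 m_k\wedge\partial_2 m_k={\rm a}_{m_k}n_{m_k}$ with $n_{m_k}\to\varkappa$ strongly in $L^r$ gives $\xi_1=\alpha_1\varkappa$, which together with $\xi_1=\partial_1m\wedge\partial_2m$ and $|\varkappa|=1$ yields $\alpha_1={\rm a}_m$ before positivity is known, after which lower semicontinuity of the $+\infty$-extension of $\mathbb{W}$ at $(\nabla m,\nabla\varkappa,\alpha_1,\alpha_2,\alpha_3)$ puts this point in $\mathbf{M}$ a.e., so the lemma (read with weak convergences) applies and the curvature constraint passes to $m$.
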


    \begin{proof}
    The proof of this theorem is given in \cite{anicic2018polyconvexity}, and for this reason, the details will be omitted. The idea of the proof is based on the Direct Method of Calculus of Variations and Lemma \ref{limit-identification}.
    \end{proof}

    \subsection{The convexity of energetic terms}
    \label{sec3-subsec2}
    In order to establish existence theorems for our proposed models, let us remark that the construction of our models was done such that Theorem \ref{general-existence} could be applied. Therefore, we will need a crucial ingredient, namely, the convexity of the energetic terms. In this subsection, our purpose is to prove that, for sufficiently small $h$, the energetic terms in the formulations of our three models are convex. 
    
    We begin with the energetic term $W^{(1)}_{\mathrm{shell}}\big( {\rm I}_{m}, {\rm II}_{m}, {\rm III}_{m}\big)$. Thanks to \eqref{basic-contractions}, we can rewrite this term as follows:
    \begin{align}
    \label{convex1-e1}
    W^{(1)}_{\mathrm{shell}}\big( {\rm I}_{m}, {\rm II}_{m}, {\rm III}_{m}\big) = \mathcal{F}({\nabla m}, \nabla n).
    \end{align}
    Here, the function $\mathcal{F}: \mathbb{R}^{3\times 2} \times \mathbb{R}^{3\times 2} \to \mathbb{R}$ is defined by
    \begin{align}
    \label{convex1-e2}
    \nonumber
   \mathcal{F}(E,G) := &\frac{\mu}{2}\Big\{\Bigl(h+\frac{h^3}{12}(-\, {\rm K}_{y_0})+\frac{h^5}{80}(\, {\rm K}_{y_0}^2)\Bigr)\,\tilde{\mathcal{F}}_{0}(E,E)+\Bigl(-\frac{h^3}{3}\,{\rm H}_{y_0}\Bigr)\,\tilde{\mathcal{F}}_{0}(E,G) \\
   \nonumber
	&  +\Bigl(\frac{h^3}{12}-\frac{h^5}{80}\, {\rm K}_{y_0}\Bigr)\,\tilde{\mathcal{F}}_{0}(G,G)+\Bigl(-\frac{h^5}{40}\,{\rm H}_{y_0}\, {\rm K}_{y_0}\Bigr)\,\tilde{\mathcal{F}}_1(E,E)\\
    \nonumber
	&  +\Bigl(\frac{h^3}{6}-\frac{h^5}{40}\, {\rm K}_{y_0}\Bigr)\,\tilde{\mathcal{F}}_1(E,G)+\Bigl(\frac{h^3}{12}+\frac{h^5}{80}(4\, {\rm H}_{y_0}^2-\, {\rm K}_{y_0})\Bigr)\, \tilde{\mathcal{F}}_2(E,E) \\
    \nonumber
	& +\Bigl(\frac{h^5}{20}\,{\rm H}_{y_0}\Bigr)\,\tilde{\mathcal{F}}_2(E,G)+\Bigl(\frac{h^5}{80}\Bigr)\,\tilde{\mathcal{F}}_2(G,G)\\
	&+\Bigl(h+
				\frac{h^3}{12}\, {\rm K}_{y_0}
				\Bigr)\Big\}-\frac{3\,\mu}{2},		
    \end{align}
    for all $E, G \in \mathbb{R}^{3 \times 2}$. In the above expression, we define
    \begin{equation}
    \label{convex1-e3}
    \tilde{\mathcal{F}}_{0}(E, G) := \langle {\widehat{\rm I}}_{y_0}^{-1/2} E^T G\, {{\rm I}}_{y_0}^{-1/2}, \id_2 \rangle = \langle E\, {\rm I}_{y_0}^{-1/2} , G\, {\rm I}_{y_0}^{-1/2}\rangle,\hspace{2.5cm}
    \end{equation}
    \begin{equation}
    \label{convex1-e4}
    \tilde{\mathcal{F}}_1(E, G) := \langle E^T G\,, {\rm L}_{y_0} {\rm I}_{y_0}^{-1} + {\rm I}_{y_0}^{-1} {\rm L}_{y_0} \rangle,\hspace{5cm}
    \end{equation}
    and
    \begin{align}
    \label{convex1-e5}
    \tilde{\mathcal{F}}_2(E, G) & :=  \langle {\rm I}_{y_0}^{-1/2} {\rm L}_{y_0} E^T G\, {\rm L}_{y_0}^{T} {\rm I}_{y_0}^{-1/2}, \id_2 \rangle  =\langle E\, {\rm L}_{y_0}^{T} {\rm I}_{y_0}^{-1/2} , G\, {\rm L}_{y_0}^{T} {\rm I}_{y_0}^{-1/2}\rangle,\quad 
    \end{align}
    for all $E, G \in \mathbb{R}^{3 \times 2}$.

    \begin{lemma}
    \label{convex1-l1}
    There exists $h_0>0$ such that, for every $0<h < h_0$, the function ${\mathcal{F}}$ is convex for every $(E,G) \in \mathbb{R}^{3\times 2} \times \mathbb{R}^{3\times 2}$.
    \end{lemma}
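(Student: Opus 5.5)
The plan is to observe that $\mathcal{F}$ is, for each fixed $x'\in\bar\omega$, a quadratic form in $(E,G)\in\mathbb{R}^{3\times 2}\times\mathbb{R}^{3\times 2}$ plus a constant. Indeed, each $\tilde{\mathcal F}_i$ in \eqref{convex1-e3}--\eqref{convex1-e5} is bilinear in $(E,G)$. The term $\frac{\mu}{2}(h+\frac{h^3}{12}{\rm K}_{y_0})-\frac{3\mu}{2}$ does not depend on $(E,G)$. Hence convexity of $\mathcal F$ is equivalent to positive semidefiniteness of the quadratic part
\[
\mathcal Q(E,G):=\mathcal F(E,G)-\tfrac{\mu}{2}\bigl(h+\tfrac{h^3}{12}{\rm K}_{y_0}\bigr)+\tfrac{3\mu}{2}.
\]
I will prove that $\mathcal Q$ is in fact positive definite for all small $h$, with an $h_0$ uniform in $x'$.

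First I collect uniform geometric bounds. Since $y_0\in C^2(\bar\omega)$ and $\partial_1y_0,\partial_2y_0$ are independent on the compact set $\bar\omega$, the quantities ${\rm I}_{y_0}^{\pm1}$, ${\rm I}_{y_0}^{-1/2}$, ${\rm L}_{y_0}$, ${\rm H}_{y_0}$ and ${\rm K}_{y_0}$ are bounded on $\bar\omega$ by a constant $C$. Moreover, ${\rm I}_{y_0}^{-1}$ has smallest eigenvalue at least some $c_0>0$. Consequently
\[
\tilde{\mathcal F}_0(E,E)=\|E\,{\rm I}_{y_0}^{-1/2}\|^2\ge c_0\|E\|^2,\qquad \tilde{\mathcal F}_2(E,E)=\|E\,{\rm L}_{y_0}^T{\rm I}_{y_0}^{-1/2}\|^2\ge 0 .
\]
In addition, all the bilinear terms satisfy $|\tilde{\mathcal F}_i(E,G)|\le C\|E\|\,\|G\|$.

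Next I sort the terms of $\mathcal Q$ into good and bad ones. The good terms are the diagonal leading terms
\[
h\,\tilde{\mathcal F}_0(E,E)\ \ge\ c_0h\|E\|^2,\qquad \tfrac{h^3}{12}\tilde{\mathcal F}_0(G,G)\ \ge\ \tfrac{c_0h^3}{12}\|G\|^2,
\]
together with the nonnegative terms $\frac{h^3}{12}\tilde{\mathcal F}_2(E,E)$ and $\frac{h^5}{80}\tilde{\mathcal F}_2(G,G)$. For the latter, the $h^5$ correction $\frac{h^5}{80}(4{\rm H}_{y_0}^2-{\rm K}_{y_0})\tilde{\mathcal F}_2(E,E)$ is simply absorbed into $c_0h\|E\|^2$. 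The remaining diagonal corrections are of size $Ch^3\|E\|^2$, $Ch^5\|E\|^2$ and $Ch^5\|G\|^2$. These are absorbed into $\frac{c_0h}{4}\|E\|^2$ and $\frac{c_0h^3}{48}\|G\|^2$ once $h$ is small.

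The main obstacle is the mixed terms
\[
-\tfrac{h^3}{3}{\rm H}_{y_0}\tilde{\mathcal F}_0(E,G),\qquad \bigl(\tfrac{h^3}{6}-\tfrac{h^5}{40}{\rm K}_{y_0}\bigr)\tilde{\mathcal F}_1(E,G),\qquad \tfrac{h^5}{20}{\rm H}_{y_0}\tilde{\mathcal F}_2(E,G).
\]
These are of order $h^3\|E\|\|G\|$, which is the same order as the geometric mean of the good terms only up to a factor $h$. I will handle them with a weighted Young inequality:
\[
Ch^3\|E\|\,\|G\|\le \tfrac{c_0h^3}{48}\|G\|^2+\tfrac{12C^2}{c_0}h^3\|E\|^2 .
\]
The second term on the right is $O(h^3)\|E\|^2$, so it is dominated by $\frac{c_0h}{4}\|E\|^2$ once $h^2<c_0^2/(48C^2)$. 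The $h^5$ mixed terms are treated identically.

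Summing up, I will obtain
\[
\mathcal Q(E,G)\ge \tfrac{c_0h}{4}\|E\|^2+\tfrac{c_0h^3}{48}\|G\|^2\ge 0 .
\]
This holds for all $0<h<h_0$, with $h_0$ depending only on $c_0$ and $C$, hence independent of $x'$. A positive semidefinite quadratic form plus a constant is convex, which proves the lemma. If needed, I would also shrink $h_0$ so that \eqref{ch5in} holds.
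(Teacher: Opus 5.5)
Your argument is correct and is essentially the paper's. Both reduce convexity to nonnegativity of the quadratic part, bound the mixed terms by Cauchy--Schwarz, and exploit that the cross terms are $O(h^3)\|E\|\,\|G\|$ while the diagonal terms are of order $h\|E\|^2$ and $h^3\|G\|^2$. The paper splits $\mathcal F=\frac{\mu}{2}(\mathcal F_a+\mathcal F_b)+\mathrm{const}$ and imposes exact $2\times 2$ discriminant conditions, written as polynomials in $h^2$ with positive constant term and made uniform in $x'$ by a compactness argument on the roots $t^*(x')$. Your Young-inequality absorption with global constants $c_0, C$ gives uniformity directly, at the cost of a less explicit $h_0$; also remember to carry the factor $\frac{\mu}{2}$ into your final lower bound.
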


    \begin{proof}
    We only need to show that there exists $h_0 > 0$ such that for every $0<h<h_0$, the following holds:
    \begin{equation}
    \label{convex1-e6}
    \frac{\partial^2 \mathcal{F}}{\partial E^2} (E, G)^2 + \frac{\partial^2 \mathcal{F}}{\partial E^2} (E, G)^2 + \frac{\partial^2 \mathcal{F}}{\partial G^2} (E, G)^2 \geq 0, \, \ \ \ \forall\  (E,G) \in \mathbb{R}^{3\times 2} \times \mathbb{R}^{3\times 2}.
    \end{equation}

    To this end, let
    $$
    \begin{aligned}
    \mathcal{F}_a (E, G) := &\Big( h + \frac{h^3}{12} (-\, {\rm K}_{y_0}) + \frac{h^5}{80}\,{\rm K}_{y_0} \Big) \tilde{\mathcal{F}}_{0} (E, E) \\
    &+ \Big( -\frac{h^3}{3}\,{\rm H}_{y_0} \Big) \tilde{\mathcal{F}}_{0} (E, G) + \Big( \frac{h^3}{12} - \frac{h^5}{80}\,{\rm K}_{y_0} \Big) \tilde{\mathcal{F}}_{0} (G, G) \\
    & + \Bigl(-\frac{h^5}{40}\,{\rm H}_{y_0}\, {\rm K}_{y_0}\Bigr)\,\tilde{\mathcal{F}}_1(E,E) + \Bigl(\frac{h^3}{6}-\frac{h^5}{40}\, {\rm K}_{y_0}\Bigr)\,\tilde{\mathcal{F}}_1(E,G),
    \end{aligned}
    $$
    and
    $$
    \begin{aligned}
    \mathcal{F}_b (E, G) := & \left( \frac{h^3}{12} + \frac{h^5}{80} (4\, {\rm H}_{y_0}^2 -\,{\rm K}_{y_0}) \right) \tilde{\mathcal{F}}_2 (E, E) \\
    & + \left( \frac{h^5}{20}\,{\rm H}_{y_0} \right) \tilde{\mathcal{F}}_2 (E, G) + \frac{h^5}{80} \tilde{\mathcal{F}}_2 (G, G).\hspace{3cm}
    \end{aligned}
    $$
    It is easy to see that
    $$
    \begin{aligned}
    \mathcal{F}(E,G) := &\frac{\mu}{2}\Big\{ \mathcal{F}_a (E, G) + \mathcal{F}_b (E, G) +\Bigl(h+
				\frac{h^3}{12}\, {\rm K}_{y_0}
				\Bigr)\Big\}-\frac{3\,\mu}{2}.
    \end{aligned}
    $$
    Therefore, to prove \eqref{convex1-e6}, it is suffice to prove that there exists $h_0 > 0$ such that for every $0<h<h_0$, we have
    \begin{align}
    \label{convex1-e7}
    \frac{\partial^2 \mathcal{F}_a}{\partial E^2} (E, G)^2 + 2 \frac{\partial^2 \mathcal{F}_a}{\partial E \partial G} (E, G)^2 + \frac{\partial^2 \mathcal{F}_a}{\partial G^2} (E, G)^2 \geq 0
    \end{align}
    and
    \begin{align}
    \label{convex1-e8}
    \frac{\partial^2 \mathcal{F}_b}{\partial E^2} (E, G)^2 + 2 \frac{\partial^2 \mathcal{F}_b}{\partial E \partial G} (E, G)^2 + \frac{\partial^2 \mathcal{F}_b}{\partial G^2} (E, G)^2 \geq 0,
    \end{align}
    for all $(E,G) \in \mathbb{R}^{3\times 2} \times \mathbb{R}^{3\times 2}$.

    We first prove \eqref{convex1-e7}. Observing from \eqref{convex1-e3}-\eqref{convex1-e5} that $\tilde{\mathcal{F}}_{0}$, $\tilde{\mathcal{F}}_1$, and $\tilde{\mathcal{F}}_2$ are quadratic forms, one can easily deduce that
    $$
    \frac{\partial^2 \mathcal{F}_a}{\partial E^2} (E, G)^2 + 2 \frac{\partial^2 \mathcal{F}_a}{\partial E \partial G} (E, G)^2 + \frac{\partial^2 \mathcal{F}_a}{\partial G^2} (E, G)^2 = 2\mathcal{F}_a(E,G), \, \ \ \ \forall\  (E,G) \in \mathbb{R}^{3\times 2} \times \mathbb{R}^{3\times 2}.
    $$
    Hence, we only need to prove that for $h$ sufficiently small
    \begin{equation}
    \label{convex1-e9}
    \mathcal{F}_a(E, G) \geq 0 \quad \ \ \ \forall\  (E,G) \in \mathbb{R}^{3\times 2} \times \mathbb{R}^{3\times 2}
    \end{equation}

    Note that since ${\rm L}_{y_0}, {\rm I}_{y_0}^{-1}$, and ${\rm I}_{y_0}^{-1/2}$ are continuous matrix fields on $\bar{\omega}$, if we let
    \begin{equation}
    \label{def-Cy0}
         {\rm C}_{y_0} := 2\max_{\bar{\omega}} \left|{\rm I}_{y_0}^{1/2} {\rm L}_{y_0}^T {\rm I}_{y_0}^{-1/2}\right|
    \end{equation}
    then $0 \leq {\rm C}_{y_0} < \infty$. From \eqref{convex1-e4}, by some simple calculations, we deduce that
    \begin{equation}
    \label{f1-inequality}
    \tilde{\mathcal{F}}_1(E, G) \leq {\rm C}_{y_0} \big|E\,{\rm I}_{y_0}^{-1/2}\big|  \big|G {\rm I}_{y_0}^{-1/2}\big|, \, \ \ \ \forall\  (E,G) \in \mathbb{R}^{3\times 2} \times \mathbb{R}^{3\times 2}.
    \end{equation}
    Thus, from the representation of $\mathcal{F}_a$, we have
    \begin{align}
    \label{lem3.4-e}
    \nonumber
    \mathcal{F}_a(E,G) &\geq \left( h + \frac{h^3}{12}(-\, {\rm K}_{y_0}) + \frac{h^5}{80}\,{\rm K}_{y_0}^2 - {\rm C}_{y_0} \frac{h^5}{40} |\, {\rm H}_{y_0}\,{\rm K}_{y_0}| \right) \big|E\, {\rm I}_{y_0}^{-1/2}\big|^2 \\
    &- \left( \frac{h^3}{3} |\, {\rm H}_{y_0}| + {\rm C}_{y_0} \left| \frac{h^3}{6} - \frac{h^5}{40}\,{\rm K}_{y_0} \right| \right) \big|E\, {\rm I}_{y_0}^{-1/2}\big|  \big|G {\rm I}_{y_0}^{-1/2}\big| \\
    &+ \left( \frac{h^3}{12} - \frac{h^5}{80}\,{\rm K}_{y_0} \right) \big|G\, {\rm I}_{y_0}^{-1/2}\big|^2.\nonumber
    \end{align}

    We will first prove that there exists $h_1 \in (0, + \infty]$ so that for every $0<h<h_1$, the following holds
    \begin{equation}
    \label{lem3.4-exeq1}
         \dfrac{h^3}{12} - \dfrac{h^5}{80}\,{\rm K}_{y_0}  \geq 0
    \end{equation}
    and
    \begin{align}
    \label{lem3.4-exeq2}
    \nonumber
        &4\left( h + \dfrac{h^3}{12}(-\, {\rm K}_{y_0}) + \dfrac{h^5}{80}\,{\rm K}_{y_0}^2 - {\rm C}_{y_0} \dfrac{h^5}{40} |{\rm H}_{y_0}\,{\rm K}_{y_0}|\right)\left( \dfrac{h^3}{12} - \dfrac{h^5}{80}\,{\rm K}_{y_0} \right) \\
        & \geq \left( \dfrac{h^3}{3} |\, {\rm H}_{y_0}| + {\rm C}_{y_0} \left| \dfrac{h^3}{6} - \dfrac{h^5}{40}\,{\rm K}_{y_0} \right|\right)^2.
    \end{align}
    It is easy to see that \eqref{lem3.4-exeq1} is equivalent to
    \begin{equation}
    \label{lem3.4-exeq3}
    {\rm K}_{y_0} \dfrac{3\,h^2}{20} \leq 1,
    \end{equation}
    and thus \eqref{lem3.4-exeq1} holds for every $0 < h < h_1'$, where
    \begin{equation}
    \label{lem3.4-exeq4}
        h_1' = \begin{cases}
            \sqrt{\dfrac{20}{3\, \max\limits_{\bar{\omega}} {\rm K}_{y_0}}} & \textrm{if } \max\limits_{\bar{\omega}} {\rm K}_{y_0} > 0, \\
            +\infty & \textrm{if otherwise}.
        \end{cases}
    \end{equation}
    Now, consider $0<h<h_1'$, thanks to \eqref{lem3.4-exeq1} and a series of straightforward calculations, we deduce that \eqref{lem3.4-exeq2} is equivalent to
    \begin{align}
    \label{lem3.4-exeq5}
    \nonumber
    &\dfrac{1}{3} -h^2 \left( \dfrac{7\,{\rm K}_{y_0}}{90} + \dfrac{{\rm H}_{y_0}^2}{9} + \dfrac{C_{y_0}|{\rm H}_{y_0}|}{9}\right) + h^4 \left(\dfrac{{\rm K}_{y_0}^2}{120} - \dfrac{C_{y_0}|{\rm H}_{y_0} {\rm K}_{y_0}|}{120} + \dfrac{{\rm K}_{y_0} |{\rm H}_{y_0}|C_{y_0}}{60} + \dfrac{C_{y_0}{\rm K}_{y_0}}{120} \right)\\
    & + h^6 \left(\dfrac{-{\rm K}_{y_0}^3}{1600} + \dfrac{C_{y_0}|{\rm K}_{y_0}{\rm H}_{y_0}|{\rm K}_{y_0}}{800} - \dfrac{{C}_{y_0} {\rm K}_{y_0}^2}{1600} \right) \geq 0 
    \end{align}
    Let 
    \begin{align*}
        \nonumber
    g(t,x') := \,&\dfrac{1}{3} -t \left( \dfrac{7\,{\rm K}_{y_0}(x')}{90} + \dfrac{{\rm H}_{y_0}^2(x')}{9} + \dfrac{C_{y_0}|{\rm H}_{y_0}(x')|}{9}\right)\\
    \nonumber
    &+ t^2 \left(\dfrac{{\rm K}_{y_0}^2(x')}{120} - \dfrac{C_{y_0}|{\rm H}_{y_0}(x') {\rm K}_{y_0}(x')|}{120} + \dfrac{{\rm K}_{y_0}(x') |{\rm H}_{y_0}(x')|C_{y_0}}{60} + \dfrac{C_{y_0}{\rm K}_{y_0}(x')}{120} \right)\\
    & + t^3 \left(\dfrac{-{\rm K}_{y_0}^3(x')}{1600} + \dfrac{C_{y_0}|{\rm K}_{y_0}{\rm H}_{y_0}(x')|{\rm K}_{y_0}(x')}{800} - \dfrac{{C}_{y_0} {\rm K}_{y_0}^2(x')}{1600} \right),
    \end{align*}
    and let $t^*(x')$ be its smallest positive root if this root exists, or $+\infty$ if otherwise. We will prove that
    \begin{equation}
    \label{extra1}
    t^* := \inf_{x'\in\bar{\omega}} t^* (x') > 0.
    \end{equation}
    To this end, assuming that there exists a sequence $\{x'_n\}_n \subset \bar{\omega}$ such that 
    $$
    \lim\limits_{n\to\infty} t^*(x'_n) = 0.
    $$
    Since $\bar{\omega}$ is a compact subset in $\mathbb{R}^2$, there exists a subsequence, still denoted by $\{x'_n\}_n$, converging to $x_0 \in \bar{\omega}$. Notice that $g$ is a continuous function on $\mathbb{R} \times \bar{\omega}$, we have
    $$
    0 = \lim\limits_{n\to\infty} g(t^*(x'_n),\, x'_n) = g(0,x_0) = \dfrac{1}{3},
    $$
    which is a contradiction. Hence \eqref{extra1} holds, and we can easily deduce that \eqref{lem3.4-exeq5} holds for every 
    \begin{align}0<h<h_1'', \quad \text{where}\quad 
    h_1'' = \sqrt{t^*}.
    \end{align}
    Let $h_1 = \min \{h_1',\,h_1''\}$. Then, for every $0<h<h_1$, it can be verified that \eqref{lem3.4-e} holds for all $(E,G)\in \mathbb{R}^{3\times 2} \times \mathbb{R}^{3\times 2}$. Thus, \eqref{convex1-e7} holds for every $0<h<h_1$.

    We now prove \eqref{convex1-e8}. Using the same observation, we only need to prove that for $h$ sufficiently small, one has
    $$
    \mathcal{F}_b(E, G) \geq 0, \, \ \ \ \forall\  (E,G) \in \mathbb{R}^{3\times 2} \times \mathbb{R}^{3\times 2}.
    $$

    Indeed, it is easy to see that 
    \begin{align}
    \label{convex1-e10}
    \nonumber
    \mathcal{F}_b(E, G) \geq &\left( \frac{h^3}{12} + \frac{h^5}{80} (4\, {\rm H}_{y_0}^2 -\,{\rm K}_{y_0}) \right) \big|E\, L^{T}_{y_0} {\rm I}_{y_0}^{-1/2}\big|^2 \\
    \nonumber
    &\ - \left( \frac{h^5}{20} |\, {\rm H}_{y_0}| \right) \big|E\, L^{T}_{y_0} {\rm I}_{y_0}^{-1/2}\big| \big|G\, L^{T}_{y_0} {\rm I}_{y_0}^{-1/2}\big| \\
    & + \frac{h^5}{80} \big|G\, L^{T}_{y_0}{\rm I}_{y_0}^{-1/2}\big|^2.
    \end{align}
    
    We will prove that there exists $h_2 \in (0,+\infty]$ so that for every $0<h<h_2$, the following holds
    \begin{equation}
    \label{lem3.4-exeq7}
    \dfrac{h^5}{20}\left( \dfrac{h^3}{12} + \dfrac{h^5}{80} (4\, {\rm H}_{y_0}^2 -\,{\rm K}_{y_0}) \right) \geq \dfrac{h^{10}}{400}{\rm H}_{y_0}^2,
    \end{equation}
    which turns out to be equivalent to \eqref{lem3.4-exeq3}. Thus, we can choose $h_2$ given by $h_1'$ in \eqref{lem3.4-exeq4}. Now, for every $0<h<h_2$, it easily follows that \eqref{convex1-e10} holds for all $(E,G)\in \mathbb{R}^{3\times 2} \times \mathbb{R}^{3\times 2}$. Thus, \eqref{convex1-e8} holds for every $0<h<h_2$. Then, by letting $h_0 = \min\{h_1,h_2\}$, our proof is complete.
    \end{proof}

    Similarly, we can rewrite 
    \begin{align}
    \label{convex2-e1}
    W^{(2)}_{\mathrm{shell}}\big( {\rm I}_{m}, {\rm II}_{m}, {\rm III}_{m}\big) = \mathcal{G}({\nabla m}, \nabla n),
    \end{align}
    where the function $\mathcal{G}: \mathbb{R}^{3\times 2} \times \mathbb{R}^{3\times 2} \to \mathbb{R}$ is defined by
    \begin{align}
    \label{convex2-e2}
    \nonumber
   \mathcal{G}(E,G) := &\frac{\mu}{2}\Big\{\Bigl(h+\frac{h^3}{12}(-\, {\rm K}_{y_0})\Bigr)\,\tilde{\mathcal{F}}_{0}(E,E)+\Bigl(-\frac{h^3}{3}\,{\rm H}_{y_0}\Bigr)\,\tilde{\mathcal{F}}_{0}(E,G) \\
   \nonumber
	&  +\Bigl(\frac{h^3}{12}\Bigr)\,\tilde{\mathcal{F}}_{0}(G,G) +\Bigl(\frac{h^3}{6}\Bigr)\,\tilde{\mathcal{F}}_1(E,G)+\Bigl(\frac{h^3}{12}\Bigr)\, \tilde{\mathcal{F}}_2(E,E) \\
	&+\Bigl(h+
				\frac{h^3}{12}\, {\rm K}_{y_0}
				\Bigr)\Big\}-\frac{3\,\mu}{2},		
    \end{align}
    for all $E, G \in \mathbb{R}^{3 \times 2}$. We have the following.
    \begin{lemma}
    \label{convex2-l2}
    There exists $h_0>0$ such that, for every $0<h < h_0$, the function ${\mathcal{G}}$ is convex for every $(E,G) \in \mathbb{R}^{3\times 2} \times \mathbb{R}^{3\times 2}$.
    \end{lemma}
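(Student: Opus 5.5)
We follow the strategy of Lemma \ref{convex1-l1}. Since $\tilde{\mathcal F}_0,\tilde{\mathcal F}_1,\tilde{\mathcal F}_2$ are bilinear in $(E,G)$ and $\mathcal G$ is a quadratic form in $(E,G)$ plus a constant, the second variation of $\mathcal G$ in any direction $(\delta E,\delta G)$ equals $\mu$ times the quadratic part of $\mathcal G$ evaluated at $(\delta E,\delta G)$. So convexity of $\mathcal G$ reduces to showing that this quadratic part is nonnegative on $\mathbb{R}^{3\times2}\times\mathbb{R}^{3\times2}$ for small $h$.

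We split the quadratic part as $\mathcal G_a+\mathcal G_b$. Here
\begin{align*}
\mathcal G_a(E,G)=\Big(h-\tfrac{h^3}{12}{\rm K}_{y_0}\Big)\tilde{\mathcal F}_0(E,E)-\tfrac{h^3}{3}{\rm H}_{y_0}\tilde{\mathcal F}_0(E,G)+\tfrac{h^3}{12}\tilde{\mathcal F}_0(G,G)+\tfrac{h^3}{6}\tilde{\mathcal F}_1(E,G),
\end{align*}
and $\mathcal G_b(E,G)=\frac{h^3}{12}\tilde{\mathcal F}_2(E,E)=\frac{h^3}{12}|E\,{\rm L}_{y_0}^T{\rm I}_{y_0}^{-1/2}|^2\ge 0$, so $\mathcal G_b$ needs no argument.

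For $\mathcal G_a$, write $x=|E\,{\rm I}_{y_0}^{-1/2}|$ and $y=|G\,{\rm I}_{y_0}^{-1/2}|$. Cauchy--Schwarz gives $|\tilde{\mathcal F}_0(E,G)|\le xy$, and \eqref{f1-inequality} with ${\rm C}_{y_0}$ from \eqref{def-Cy0} gives $|\tilde{\mathcal F}_1(E,G)|\le {\rm C}_{y_0}xy$. Hence
\begin{align*}
\mathcal G_a\ge \Big(h-\tfrac{h^3}{12}{\rm K}_{y_0}\Big)x^2-\Big(\tfrac{h^3}{3}|{\rm H}_{y_0}|+\tfrac{h^3}{6}{\rm C}_{y_0}\Big)xy+\tfrac{h^3}{12}y^2 .
\end{align*}
This binary quadratic form is nonnegative for all $x,y\ge0$ provided $h-\frac{h^3}{12}{\rm K}_{y_0}\ge0$ and the discriminant condition
\begin{align*}
4\Big(h-\tfrac{h^3}{12}{\rm K}_{y_0}\Big)\tfrac{h^3}{12}\ge \tfrac{h^6}{36}\big(2|{\rm H}_{y_0}|+{\rm C}_{y_0}\big)^2
\end{align*}
hold. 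Dividing by $h^4/3$, the discriminant condition becomes
\begin{align*}
1-h^2\Big(\tfrac{{\rm K}_{y_0}}{12}+\tfrac{(2|{\rm H}_{y_0}|+{\rm C}_{y_0})^2}{12}\Big)\ge 0 .
\end{align*}

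The main step is choosing $h$ uniformly in $x'$. This is simpler here than in Lemma \ref{convex1-l1} because the conditions are polynomial in $h^2$ of degree one. Since $y_0\in C^2(\bar\omega)$, the curvatures ${\rm H}_{y_0}$ and ${\rm K}_{y_0}$ are continuous on the compact set $\bar\omega$, and ${\rm C}_{y_0}<\infty$. Set $M:=\max_{\bar\omega}\big({\rm K}_{y_0}^+ + (2|{\rm H}_{y_0}|+{\rm C}_{y_0})^2\big)$. If $M>0$ take $h_0=\sqrt{12/M}$; otherwise $h_0=+\infty$, and one may intersect with the bound from \eqref{ch5in} if desired. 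For $0<h<h_0$ both conditions hold at every $x'\in\bar\omega$: the first follows since $\frac{h^2}{12}{\rm K}_{y_0}\le \frac{h^2 M}{12}<1$. So the quadratic part of $\mathcal G$ is nonnegative and $\mathcal G$ is convex.

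The only delicate point is sign bookkeeping for ${\rm H}_{y_0}$ and for $\tilde{\mathcal F}_1$, which may be negative. Bounding both cross terms by absolute values makes this harmless.
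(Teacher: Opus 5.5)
Your proof is correct and is essentially the paper's argument: discard the nonnegative term $\frac{h^3}{12}\tilde{\mathcal F}_2(E,E)$, bound the two cross terms by Cauchy--Schwarz and \eqref{f1-inequality}, impose the discriminant condition on the resulting binary quadratic form in $|E\,{\rm I}_{y_0}^{-1/2}|$ and $|G\,{\rm I}_{y_0}^{-1/2}|$, and choose $h_0$ uniformly using the continuity of ${\rm H}_{y_0},{\rm K}_{y_0}$ on $\bar\omega$. Your cross-term coefficient $\frac{h^3}{6}{\rm C}_{y_0}$ is the correct one; the paper's $\frac{h^3}{12}{\rm C}_{y_0}$ in \eqref{lem3.5-exeq2} looks like a factor-two slip and is inconsistent with Lemma \ref{convex1-l1}, but this only changes the explicit value of $h_0$, not the conclusion.
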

    \begin{proof}
        By using a similar argument as in the proof of the previous lemma and notice \eqref{convex1-e5}, we only need to prove that, for sufficiently small $h$, one has
        \begin{align}
    \label{lem3.5-exeq1}
    \nonumber
    &\Bigl(h+\frac{h^3}{12}(-\, {\rm K}_{y_0})\Bigr)\,\tilde{\mathcal{F}}_{0}(E,E)+\Bigl(-\frac{h^3}{3}\,{\rm H}_{y_0}\Bigr)\,\tilde{\mathcal{F}}_{0}(E,G) \\
	&  +\Bigl(\frac{h^3}{12}\Bigr)\,\tilde{\mathcal{F}}_{0}(G,G) +\Bigl(\frac{h^3}{6}\Bigr)\,\tilde{\mathcal{F}}_1(E,G)   \geq 0		
    \end{align}
    for all $E, G \in \mathbb{R}^{3 \times 2}$.

    Thanks to \eqref{f1-inequality} and \eqref{convex1-e3}, we have
    \begin{align}
    \label{lem3.5-exeq2}
    \nonumber
    &\Bigl(h+\frac{h^3}{12}(-\, {\rm K}_{y_0})\Bigr)\,\tilde{\mathcal{F}}_{0}(E,E)+\Bigl(-\frac{h^3}{3}\,{\rm H}_{y_0}\Bigr)\,\tilde{\mathcal{F}}_{0}(E,G) \\
   \nonumber
	&  +\Bigl(\frac{h^3}{12}\Bigr)\,\tilde{\mathcal{F}}_{0}(G,G) +\Bigl(\frac{h^3}{6}\Bigr)\,\tilde{\mathcal{F}}_1(E,G)\\
	& \geq  \Bigl(h+\frac{h^3}{12}(-\, {\rm K}_{y_0})\Bigr)\,\big|E\, {\rm I}_{y_0}^{-1/2}\big|^2 - \Bigl( \dfrac{h^3 |{\rm H}_{y_0}|}{3} + \dfrac{h^3 C_{y_0}}{12} \Bigr) \big|E\, {\rm I}_{y_0}^{-1/2}\big|\big|G\, {\rm I}_{y_0}^{-1/2}\big| + \dfrac{h^3}{12}|\big|G\, {\rm I}_{y_0}^{-1/2}\big|^2		
    \end{align}
    for all $E, G \in \mathbb{R}^{3 \times 2}$.

 Let $h_0 \in (0,+\infty]$ be such that for every $0<h<h_0$, the following holds
 $$
 \dfrac{h^3}{3} \Bigl(h+\frac{h^3}{12}(-\, {\rm K}_{y_0})\Bigr) \geq \Bigl( \dfrac{h^3 |{\rm H}_{y_0}|}{3} + \dfrac{h^3 C_{y_0}}{12} \Bigr)^2,
 $$
 which is equivalent to
\begin{equation}
\label{lem3.5-exeq3}
1 \geq h^2 \left( \dfrac{{\rm K}_{y_0}}{12} + \dfrac{1}{3} \Bigg( |{\rm H}_{y_0}| + \dfrac{C_{y_0}}{4}\Bigg)^2\right),
\end{equation}
and thus we need to choose
    \begin{equation}
    \label{lem3.5-exeq4}
        h_0 = \begin{cases}
            \sqrt{\dfrac{1}{ \max\limits_{\bar{\omega}} {\rm T}_{y_0}}} & \textrm{if } \max\limits_{\bar{\omega}} {\rm T}_{y_0} > 0, \\
            +\infty & \textrm{if otherwise},
        \end{cases}
    \end{equation}
    where
    \begin{equation}
    \label{lem3.5-exeq5}
    {\rm T}_{y_0} := \dfrac{{\rm K}_{y_0}}{12} + \dfrac{1}{3} \Bigg( |{\rm H}_{y_0}| + \dfrac{C_{y_0}}{4}\Bigg)^2.
    \end{equation}
    Hence, for every $0<h<h_0$, it follows from \eqref{lem3.5-exeq2} that \eqref{lem3.5-exeq1} holds. Our proof is complete.
     \end{proof}

    Next, we study the convexity of $W_{\rm curv}({{\rm a}_m}, {{\rm a}_m} {\rm A}_m^\pm)$ and $W^{(1)}_{\rm curv}({{\rm a}_m}, {{\rm a}_m} {\rm A}_m^\pm)$. 
    \begin{lemma}
    \label{convex3-l3}
    The functions $W_{\rm curv}({{\rm a}_m}, {{\rm a}_m} {\rm A}_m^\pm)$ and $W^{(1)}_{\rm curv}({{\rm a}_m}, {{\rm a}_m} {\rm A}_m^\pm)$ given by \eqref{wcurv1} and \eqref{wcurv3} are convex.
    \end{lemma}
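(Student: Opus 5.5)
The plan is to view both energies as functions of the triple $(a,b,c)=({\rm a}_m,{\rm a}_m{\rm H}_m,{\rm a}_m{\rm K}_m)$, which is the argument in which Theorem \ref{general-existence} requires convexity. The reason is Remark \ref{sec3-subsec1-rem1}(i): weak convergence is available for ${\rm a}_m{\rm H}_m$ and ${\rm a}_m{\rm K}_m$, but not for ${\rm H}_m$ and ${\rm K}_m$ themselves. The key observation is that
\begin{align*}
{\rm a}_m{\rm A}_m^\pm = a \mp h\, b + \frac{h^2}{4}\, c
\end{align*}
is an \emph{affine} (in fact linear) function of $(a,b,c)$. Hence every term of \eqref{wcurv1} and \eqref{wcurv3} is a one-variable function of one of the three linear forms
\begin{align*}
\ell_0(a,b,c)=a,\qquad \ell_\pm(a,b,c)=a\mp h\,b+\frac{h^2}{4}\,c .
\end{align*}
The natural domain is $\mathcal{D}:=\{(a,b,c)\,:\,\ell_0>0,\ \ell_+>0,\ \ell_->0\}$. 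It is an intersection of open half-spaces and therefore convex. By the footnote to Lemma \ref{limit-identification}, every $m\in V^h$ takes values in $\mathcal{D}$ almost everywhere.

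The steps, in order, are as follows.
\begin{enumerate}
\item Check the sign of the coefficients that depend on the reference geometry. The quantity ${\rm a}_{y_0}$ is positive, and ${\rm A}_{y_0}^\pm=(1\mp\frac h2\kappa_1(y_0))(1\mp\frac h2\kappa_2(y_0))>0$ by the thickness condition \eqref{ch5in}. The constants $h/6$, $(\lambda+2\mu)/4$ and $\lambda/4$ are also positive.
\item Treat the logarithmic terms of $W^{(1)}_{\rm curv}$ and $W_{\rm curv}$. Each is of the form $-\beta\log \ell(a,b,c)+{\rm const}$ with $\beta>0$ and $\ell\in\{\ell_0,\ell_\pm\}$. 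Since $-\log$ is convex on $(0,\infty)$ and is composed with an affine map, each such term is convex on $\mathcal{D}$. The subtracted quantities $\log({\rm a}_{y_0}{\rm A}_{y_0}^\pm)$ and $\log{\rm a}_{y_0}$ are constants in $(a,b,c)$ and play no role.
\item Treat the quadratic part of $W_{\rm curv}$. Each term is a positive multiple of $\ell(a,b,c)^2$. For instance, ${\rm a}_{y_0}{\rm A}_{y_0}^\pm\big(\ell_\pm/({\rm a}_{y_0}{\rm A}_{y_0}^\pm)\big)^2$ equals $\ell_\pm^2/({\rm a}_{y_0}{\rm A}_{y_0}^\pm)$, and $4\,{\rm a}_{y_0}\ell_0^2/{\rm a}_{y_0}^2$ equals $4\ell_0^2/{\rm a}_{y_0}$. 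A square of an affine map is convex, so these terms are convex on all of $\mathbb{R}^3$.
\item Conclude. A sum of convex functions with nonnegative weights, plus constants, is convex on $\mathcal{D}$.
\end{enumerate}
If one wants the statement on the set $\mathbf{M}$ of Theorem \ref{general-existence}, the same argument applies after embedding: the functions depend only on the last three entries $(a,b,c)$ of $(A,B,a,b,c)$, so they are convex on any convex subset on which $\ell_0,\ell_\pm>0$.

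The only real point of care, rather than a genuine obstacle, is the choice of variables. Written in terms of $({\rm a}_m,{\rm H}_m,{\rm K}_m)$, the expressions ${\rm a}_m{\rm A}_m^\pm$ are products and the convexity claim would fail or be meaningless. Convexity must therefore be stated in the variables $({\rm a}_m,{\rm a}_m{\rm H}_m,{\rm a}_m{\rm K}_m)$, consistently with Remark \ref{sec3-subsec1-rem1}(i). The other point to check is the positivity of ${\rm A}_{y_0}^\pm$, which is exactly where \eqref{ch5in} enters. These logarithmic terms also blow up as $\ell_0\to0^+$ or $\ell_\pm\to0^+$, which I would note for later use in the orientation-preserving hypothesis of Theorem \ref{general-existence}.
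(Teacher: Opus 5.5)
Your proposal is correct and follows the same route as the paper, whose proof simply invokes the convexity of $x\mapsto -\log x$ and $x\mapsto x^2$ together with the positivity of $\lambda$ and $\lambda+2\mu$. You spell out what that one-line argument leaves implicit: ${\rm a}_m{\rm A}_m^\pm$ is linear in $({\rm a}_m,{\rm a}_m{\rm H}_m,{\rm a}_m{\rm K}_m)$, the domain is convex, and the weights ${\rm A}_{y_0}^\pm$ are positive by \eqref{ch5in}.
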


    \begin{proof}
    The desired result follows directly from the fact that the functions $x \mapsto -\log x$ and $x \mapsto x^2$ are convex and that $\lambda$ and $\lambda + 2\mu$ are positive constants.
    \end{proof}

    The only term left is $W_{\text{curv}}^{(2)}({\rm a}_m, {\rm a}_m \Delta{\rm H}_m, {\rm a}_m \Delta{\rm K}_m)$. We also have the convexity of this function as follows.
    \begin{lemma}
    \label{convex4-l4}
    There exists $h_0 \in (0,+\infty]$ such that, for every $0<h<h_0$, the function $W_{\textrm{\rm curv}}^{(2)}({\rm a}_m, {\rm a}_m \Delta{\rm H}_m, {\rm a}_m \Delta{\rm K}_m)$ is convex.
    \end{lemma}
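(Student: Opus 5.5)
The plan is to observe that, after the change of variables suggested by Remark \ref{sec3-subsec1-rem1}, the function $W_{\rm curv}^{(2)}$ is a quadratic polynomial in the three arguments. Convexity then reduces to positive semidefiniteness of a symmetric $3\times 3$ matrix. I will check this with Sylvester's criterion uniformly in $x'\in\bar\omega$.

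\textbf{Step 1: rewrite as a quadratic form.} Set $a:={\rm a}_m$, $b:={\rm a}_m\Delta{\rm H}_m$ and $c:={\rm a}_m\Delta{\rm K}_m$. Multiplying out the bracket in \eqref{wcurv4} by ${\rm a}_m^2$ gives
\[
{\rm a}_m^2(\Delta{\rm H}_m)^2=b^2,\quad {\rm a}_m^2\Delta{\rm K}_m=a\,c,\quad {\rm a}_m^2\Delta{\rm H}_m\Delta{\rm K}_m=b\,c,\quad {\rm a}_m^2(\Delta{\rm K}_m)^2=c^2 .
\]
Hence
\begin{align*}
W_{\rm curv}^{(2)}=\frac{\lambda}{4\,{\rm a}_{y_0}}\Big[&\Big(h+\tfrac{h^3}{12}{\rm K}_{y_0}\Big)a^2+\Big(\tfrac{h^3}{3}+\tfrac{h^5}{80}(16{\rm H}_{y_0}^2-4{\rm K}_{y_0})\Big)b^2\\
&+\tfrac{h^3}{6}\,a\,c-\tfrac{h^5}{10}{\rm H}_{y_0}\,b\,c+\tfrac{h^5}{80}\,c^2\Big]-\frac{\lambda}{4}.
\end{align*}
This is an affine constant plus a quadratic form $Q_h(x')[(a,b,c)]$. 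Since $\lambda>0$ and ${\rm a}_{y_0}>0$ on $\bar\omega$, it suffices to show that $Q_h(x')$ is positive semidefinite for all $x'\in\bar\omega$ once $h$ is small enough.

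\textbf{Step 2: rescale to remove the $h$-degeneracy.} The coefficients carry different powers of $h$, so I substitute $b=\tilde b/h$ and $c=\tilde c/h^2$. This is a linear change of variables and does not affect semidefiniteness. It gives $Q_h[(a,b,c)]=h\,M(h,x')[(a,\tilde b,\tilde c)]$ with $M(h,x')=M_0+h^2M_1(x')$, where
\[
M_0=\begin{pmatrix}1&0&\tfrac1{12}\\ 0&\tfrac13&0\\ \tfrac1{12}&0&\tfrac1{80}\end{pmatrix}.
\]
The entries of $M_1$ are $\tfrac{{\rm K}_{y_0}}{12}$ at position $(1,1)$, $\tfrac{16{\rm H}_{y_0}^2-4{\rm K}_{y_0}}{80}$ at $(2,2)$, and $-\tfrac{{\rm H}_{y_0}}{20}$ at the off-diagonal positions $(2,3)$ and $(3,2)$. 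The leading principal minors of $M_0$ are $1$, $\tfrac13$ and $\tfrac13\big(\tfrac1{80}-\tfrac1{144}\big)>0$. Hence $M_0$ is positive definite, with smallest eigenvalue $\theta_0>0$.

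\textbf{Step 3: uniform smallness of $h$.} Since $y_0\in C^2(\bar\omega)$, the curvatures ${\rm H}_{y_0}$ and ${\rm K}_{y_0}$ are continuous on the compact set $\bar\omega$. So $\Lambda:=\max_{\bar\omega}\|M_1(x')\|<\infty$, and for $h^2\Lambda\le\theta_0$ we get $M(h,x')\succeq(\theta_0-h^2\Lambda)\,\id_3\succeq0$. I therefore take $h_0=\sqrt{\theta_0/\Lambda}$, with $h_0=+\infty$ if $\Lambda=0$.

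Alternatively, one can argue explicitly as in the proof of Lemma \ref{convex1-l1}. There, the three leading minors of $M(h,x')$ are polynomials in $h^2$ that take positive values at $h=0$, and the compactness argument on their smallest positive roots yields a positive $h_0$.

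The only delicate point is Step 2. Without rescaling, the matrix degenerates as $h\to0$, because the $c^2$ coefficient is of order $h^5$ against an $ac$ coefficient of order $h^3$. Positivity then hinges on the numerical inequality $\tfrac1{80}>\tfrac1{144}$, i.e.\ on the $(a,c)$ block of $M_0$, which I would verify first.
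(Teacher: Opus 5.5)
Your proof works and verifies semidefiniteness differently from the paper, but Step 2 has a scaling slip you need to fix. Under $b=\tilde b/h$, $c=\tilde c/h^2$ the mixed term becomes $-\frac{h^5}{10}{\rm H}_{y_0}\,bc=h\cdot\bigl(-\frac{h}{10}{\rm H}_{y_0}\bigr)\tilde b\,\tilde c$. So the $(2,3)$ entry of $M(h,x')$ is $-\frac{h}{20}{\rm H}_{y_0}$, not $-\frac{h^2}{20}{\rm H}_{y_0}$. Hence $M(h,x')=M_0+hN_1(x')+h^2N_2(x')$, where $N_1$ holds the off-diagonal entries and $N_2$ the diagonal ones. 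Your norm bound therefore does not give $M\succeq(\theta_0-h^2\Lambda)\id_3$, because $\|M(h,x')-M_0\|$ is of order $h|{\rm H}_{y_0}|/20$. The repair is immediate. Use $\|M(h,x')-M_0\|\le h\Lambda_1+h^2\Lambda_2$, with $\Lambda_1=\frac1{20}\max_{\bar\omega}|{\rm H}_{y_0}|$ and $\Lambda_2=\max_{\bar\omega}\|N_2\|$. Then take $h_0$ to be the positive root of $\Lambda_2h^2+\Lambda_1h=\theta_0$, or $h_0=+\infty$ if both constants vanish. As a check, the corrected entry gives $M_{22}M_{33}-M_{23}^2=\frac1{240}-\frac{h^2}{1600}{\rm K}_{y_0}$. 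This is exactly the paper's minor $\frac{h^8}{60}-\frac{h^{10}}{400}{\rm K}_{y_0}$ divided by $4h^8$. Your alternative via leading minors is unaffected: since $M_{12}=0$, the entry $M_{23}$ enters only through $M_{23}^2$, so those minors are still polynomials in $h^2$.

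Both proofs reduce the lemma to semidefiniteness of the same $3\times3$ matrix, uniformly on $\bar\omega$. The paper's variables $(r,X,Y)$ are just your $(a,b,c)$ divided by ${\rm a}_{y_0}$. The paper then checks every principal minor of the unscaled Hessian. This gives explicit thresholds such as $h^2<\frac{16}{3\max(-{\rm K}_{y_0})}$ and $h^2<\frac{20}{3\max{\rm K}_{y_0}}$ for the $2\times2$ minors. The determinant, after division by $h^9$, is a polynomial in $h^2$ with constant term $\frac{2}{135}$, and the paper handles it with the smallest-positive-root compactness argument of Lemma \ref{convex1-l1}.

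Your congruence by ${\rm diag}(1,h,h^2)$ explains why that argument works. The paper's determinant equals $8h^9\det M$, and $8\det M_0=\frac{8}{540}=\frac{2}{135}$ is precisely that constant term. Your route replaces the minor-by-minor bookkeeping with one uniform perturbation estimate around a fixed positive definite matrix. The cost is a less explicit threshold; the gain is that you never expand the determinant by hand. In the paper that expansion has a slip: the $h^2$-coefficient $\frac{{\rm K}_{y_0}}{450}$ should read $\frac{{\rm K}_{y_0}}{1800}$, which is harmless there since only the constant term matters.

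Your remark on the $(a,c)$ block also makes Remark \ref{finalremark}(i) transparent. Dropping the $O(h^5)$ terms removes the $(3,3)$ entry while the $(1,3)$ entry stays $\frac1{12}$, so the matrix becomes indefinite.
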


    \begin{proof}
    Let us first remark that
    \begin{align}
    \label{convex4-e1}
    W_{\mathrm{curv}}^{(2)}({{\rm a}_m},{\rm a}_m \Delta{\rm H}_m, {\rm a}_m \Delta{\rm K}_m)=F\left(\frac{{\rm a}_m}{{\rm a}_{y_0}},\frac{{\rm a}_m}{{\rm a}_{y_0}}\Delta{\rm H}_m,\frac{{\rm a}_m}{{\rm a}_{y_0}} \Delta{\rm K}_m\right),
    \end{align}
    where $F: \mathbb{R}^3 \to \mathbb{R}$ given by
    \begin{align}
    \label{convex4-e2}
	F(r,X,Y) :=&
	\frac{\lambda}{4}\,{\rm a}_{y_0}
	\Bigg[
	h\,r^2
	+\frac{h^3}{12}\Big(\, {\rm K}_{y_0} r^2+4X^2+2\,rY\Big)+\frac{h^5}{80}
	\Big((16\, {\rm H}_{y_0}^2-4\, {\rm K}_{y_0})X^2-8\, {\rm H}_{y_0}XY+Y^2\Big)
	\Bigg],
	\end{align}
	for every $(r,X,Y) \in \mathbb{R}^3$. To prove Lemma \ref{convex4-l4}, it suffices to prove that the function $F$ is convex. Indeed, since $F$ is quadratic in $(r,X,Y)$, it is convex if and only if its Hessian matrix is positive semi-definite. By a straightforward calculation, the Hessian matrix is given by
	\[
	\nabla^2 F
	=
	\frac{\lambda {\rm a}_{y_0}}{4}
	\begin{pmatrix}
		2\,h+\dfrac{\, {\rm K}_{y_0}h^3}{6} & 0 & \dfrac{h^3}{6} \\[6pt]
		0 &
		\dfrac{2\,h^3}{3}
		+\dfrac{h^5}{40}(16\, {\rm H}_{y_0}^2-4\, {\rm K}_{y_0})
		&
		-\dfrac{\, {\rm H}_{y_0}h^5}{10}
		\\[6pt]
		\dfrac{h^3}{6}
		&
		-\dfrac{\, {\rm H}_{y_0}h^5}{10}
		&
		\dfrac{h^5}{40}
	\end{pmatrix}.
	\]
    
    We will prove that this matrix is positive semi-definite by verifying the nonnegativity of its principal minors, provided sufficiently small $h$. For the readers' convenience, we will split the proof into three steps.
    

    In the first step, to have all three $1\times 1$  principal minors nonnegative, we need
    \begin{equation}
    \label{convex4-e4}
    2\,h+\dfrac{\, {\rm K}_{y_0}h^3}{6} \geq 0\qquad  \textrm{ and } \qquad \dfrac{2\,h^3}{3}
		+\dfrac{h^5}{40}(16\, {\rm H}_{y_0}^2-4\, {\rm K}_{y_0}) \geq 0.
    \end{equation}
    On the one hand, notice that 
    $
    {\rm H}_{y_0} = \dfrac{1}{2} \big( \kappa_1(y_0) + \kappa_2(y_0) \big), \textrm { and } {\rm K}_{y_0} = \kappa_1(y_0)\kappa_2(y_0),
    $
    and thus the second inequality in \eqref{convex4-e4} holds for every $h>0$. On the other hand, the first inequality in \eqref{convex4-e4} holds for every $0<h<h_1$, where
    \begin{equation}
    \label{convex4-e3}
      h_1 = \begin{cases}
            \sqrt{\dfrac{12}{ \max\limits_{\bar{\omega}} ({\rm -K}_{y_0})}} & \textrm{if } \max\limits_{\bar{\omega}} ({\rm -K}_{y_0}) > 0, \\
            +\infty & \textrm{if otherwise}.
        \end{cases}
    \end{equation}

    In the next step, consider $0<h<h_1$ where $h_1$ is obtained above. To have all three $2\times 2$ principal minors nonnegative, we need
    \begin{equation}
    \label{convex4-e5}
    \dfrac{h^6}{45} + \dfrac{\, {\rm K}_{y_0}h^8}{240} \geq 0  
    \end{equation}
    and
    \begin{equation}
    \label{convex4-e6}
    \dfrac{h^8}{60} - \dfrac{h^{10}}{400}\,{\rm K}_{y_0} \geq 0.
    \end{equation}
    On the one hand, notice that \eqref{convex4-e5} holds for every $0<h<h_2'$, where
    \begin{equation}
    \label{condition1}
     h_2' = \begin{cases}
            \sqrt{\dfrac{16}{ 3\max\limits_{\bar{\omega}} ({\rm -K}_{y_0})}} & \textrm{if } \max\limits_{\bar{\omega}} ({\rm -K}_{y_0}) > 0, \\
            +\infty & \textrm{if otherwise}.
        \end{cases}
    \end{equation}
    On the other hand, \eqref{convex4-e6} holds for every $0<h<h_2''$, where
    \begin{equation}
    \label{condition2}
     h_2' = \begin{cases}
            \sqrt{\dfrac{20}{ 3\max\limits_{\bar{\omega}} {\rm K}_{y_0}}} & \textrm{if } \max\limits_{\bar{\omega}} {\rm K}_{y_0} > 0, \\
            +\infty & \textrm{if otherwise}.
        \end{cases}
    \end{equation}
    Let $h_2 = \min\{h_1,h_2', \, h_2''\}$, it follows that \eqref{convex4-e5} and \eqref{convex4-e6} hold for every $0 < h < h_2$.

    In the last step, consider $0<h<h_2$ where $h_2$ is obtained in the previous step. The only $3\times 3$ minor is positive if and only if 
    \begin{equation}
    \label{convex4-e7}
    \bigg(2\,h+\dfrac{\, {\rm K}_{y_0}h^3}{6}\bigg) \bigg( \dfrac{h^8}{60} - \dfrac{h^{10}}{400}\,{\rm K}_{y_0} \bigg) - \dfrac{h^6}{36}\bigg(\dfrac{2\,h^3}{3}
		+\dfrac{h^5}{40}(16\, {\rm H}_{y_0}^2-4\, {\rm K}_{y_0})\bigg) \geq 0.
    \end{equation} 
    This is equivalent to
    $$
    \dfrac{2}{135} + h^{2}\bigg(\dfrac{\, {\rm K}_{y_0}}{450} - \dfrac{\, {\rm H}_{y_0}^2}{90} \bigg)  - h^{4}\dfrac{\, {\rm K}_{y_0}^2}{2400} \geq 0.
    $$
    Let
    \begin{equation}
    \label{equation1}
    g(t,x') := \dfrac{2}{135} + t\bigg(\dfrac{\, {\rm K}_{y_0}(x')}{450} - \dfrac{\, {\rm H}_{y_0}^2(x')}{90} \bigg)  - t^2\dfrac{\, {\rm K}_{y_0}^2(x')}{2400}
    \end{equation}
    and let $t^*(x')$ be its smallest positive root if this root exists, or $+\infty$ if otherwise. By a similar argument as in the proof of Lemma \ref{convex1-l1}, we deduce that
    \begin{equation}
    \label{equation2}
    t^* := \inf_{x'\in\bar{\omega}} t^* (x') > 0.
    \end{equation}
    Thus, let 
    $
    h_3 = \sqrt{t^*}.
 $   
    It is easy to see that \eqref{convex4-e7} holds for every $0<h<h_3$. Now, let $h_0 = \min\{h_3,h_2\}$, our statement follows. The proof is complete.
\end{proof}
    
	  \subsection{The existence results for the proposed models}
    Now we are able to establish existence results for our proposed models. In particular, we have the following.

    \begin{theorem}
    \label{existence1}
    Let $\omega \subset \mathbb{R}^2$ be an open bounded domain with Lipschitz boundary. Let $y_0 \in C^2(\bar{\omega}, \mathbb{R}^3)$ be such that $\partial_{x_1} y_0(x')$ and $\partial_{x_2} y_0(x')$ are linearly independent for every $x' \in \bar{\omega}$. Then, for every $0<h<h_0$ that satisfies \eqref{ch5in}, where $h_0$ satisfies Lemma \ref{convex1-l1}, the functional $\mathcal{J}_1$ defined as in \eqref{model1} has a minimizer in $V^h$.
    \end{theorem}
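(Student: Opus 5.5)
The plan is to apply Theorem \ref{general-existence} with $W(x',m):=\big[W^{(1)}_{\rm shell}({\rm I}_m,{\rm II}_m,{\rm III}_m)+W_{\rm curv}({\rm a}_m,{\rm a}_m{\rm A}_m^\pm)\big]{\rm a}_{y_0}$ and $L:=\mathcal{L}$. By \eqref{e4o}, $\mathcal{L}$ is a continuous linear form in $(m,n_m)$ on $W^{1,p}\times W^{1,p}$, provided the loads $\bar f,\bar t$ and the moment terms are in the appropriate dual spaces. The constant terms coming from $y_0$ are harmless. We then check the four hypotheses in turn.

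\textbf{Polyconvexity and measurability.} By \eqref{convex1-e1}, the shell part equals $\mathcal{F}(\nabla m,\nabla n_m)$, and $\mathcal{F}$ is convex for $0<h<h_0$ by Lemma \ref{convex1-l1}. For the curvature part we use ${\rm a}_m{\rm A}_m^\pm = a\mp h\,b+\frac{h^2}{4}c$ with $(a,b,c)=(1,{\rm H}_m,{\rm K}_m){\rm a}_m$. So $W_{\rm curv}$ is a sum, with coefficients $\lambda/4>0$ and $(\lambda+2\mu)/4>0$, of $-\log$ and of squares of affine functions of $(a,b,c)$, plus terms depending only on $x'$. By Lemma \ref{convex3-l3} it is convex on the set where these affine arguments are positive. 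We define $\mathbb{W}(x',A,B,a,b,c)$ by these formulas, extended by $+\infty$ off the admissible set; the admissibility conditions match the definition of $\mathbf{M}$ and $V^h$. The weight ${\rm a}_{y_0}>0$ does not affect convexity. Measurability in $x'$ is immediate, since all coefficients are continuous functions of ${\rm H}_{y_0},{\rm K}_{y_0},{\rm I}_{y_0},{\rm L}_{y_0}$ on $\bar\omega$.

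\textbf{Orientation preservation.} As ${\rm a}_m\to0^+$, or ${\rm A}_m^\pm\to0^+$ with ${\rm a}_m$ bounded below, the term $-\log({\rm a}_m{\rm A}_m^\pm)$ or $-\log{\rm a}_m$ blows up. Its coefficients $\frac{h}{6}{\rm A}^\pm_{y_0}$ and $\frac{4h}{6}$ are strictly positive, because $h$ satisfies \eqref{ch5in}, so ${\rm A}_{y_0}^\pm=(1\mp\frac h2\kappa_1)(1\mp\frac h2\kappa_2)>0$. The other terms are bounded below, which gives the required divergence to $+\infty$.

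\textbf{Coercivity (the main obstacle).} First, strict convexity is needed. The proof of Lemma \ref{convex1-l1} shows that for slightly smaller $h$ (or by strict inequalities in \eqref{lem3.4-exeq2}) one gets $\mathcal{F}_a\ge c\,(|E{\rm I}_{y_0}^{-1/2}|^2+|G{\rm I}_{y_0}^{-1/2}|^2)$ with $c>0$ uniform on $\bar\omega$, using compactness of $\bar\omega$ exactly as in the proof of \eqref{extra1}. Since ${\rm I}_{y_0}$ is uniformly positive definite, this yields $W^{(1)}_{\rm shell}\ge C(|\nabla m|^2+|\nabla n_m|^2)-C'$, and $\mathcal{F}_b\ge0$ takes care of the remaining terms. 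This gives $p=q=2$.

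For ${\rm a}_m^q$ we use the Simpson $(\det)^2$ term. It is $\ge \frac{\lambda}{4}\cdot\frac{4{\rm a}_m^2}{{\rm a}_{y_0}}$ plus nonnegative terms. The logarithmic terms are bounded below by $-C(1+{\rm a}_m+{\rm a}_m{\rm A}^\pm_m)$, since $-\log x\ge 1-x$, and these are absorbed by the quadratics via $x^2-Cx\ge -C^2/4$. This gives $q=2$.

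Finally, $V^h$ is nonempty with finite energy: take $m=y_0$, which gives $\inf\mathcal I<\infty$. Theorem \ref{general-existence} then yields a minimizer $m^*\in V^h$.
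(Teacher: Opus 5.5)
Your proposal is correct and follows the paper's route: you apply Theorem~\ref{general-existence} with $p=q=2$ and get polyconvexity from Lemmas~\ref{convex1-l1} and~\ref{convex3-l3}. You also make explicit what the paper leaves as easily seen, namely uniform positive definiteness of $\mathcal{F}_a$ for $h<h_0$ (the inequalities in that proof are strict for $h<h_0$, so no shrinking of $h$ is needed), absorption of the logarithms by the quadratic Simpson terms, and $m=y_0$ to get $\inf\mathcal{J}_1<\infty$. One small fix in the orientation-preserving step: write $-\log({\rm a}_m{\rm A}_m^\pm)=-\log{\rm a}_m-\log{\rm A}_m^\pm$ and let the ${\rm a}_m^2$ term absorb every $-\log{\rm a}_m$ contribution, which gives $W\ge -c\log{\rm A}_m^\pm-C$; your condition ${\rm a}_m$ bounded below is not the right one, since ${\rm a}_m$ bounded \emph{above} is what makes $-\log({\rm a}_m{\rm A}_m^\pm)$ blow up.
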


    \begin{proof}
    We need to verify that the functional $\mathcal{J}_1$ satisfies all the assumptions of Theorem \ref{general-existence}, with notice that here $p=q=2$. Thanks to Lemmas \ref{convex1-l1} and \ref{convex3-l3}, the polyconvexity is ensured. The other assumptions can be easily seen. Thus, our desired result follows as a direct consequence of Theorem \ref{general-existence}. Our proof is complete.
    \end{proof}

    \begin{theorem}
    \label{existence2}
     Let $\omega \subset \mathbb{R}^2$ be an open bounded domain with Lipschitz boundary. Let $y_0 \in C^2(\bar{\omega}, \mathbb{R}^3)$ be such that $\partial_{x_1} y_0(x')$ and $\partial_{x_2} y_0(x')$ are linearly independent for every $x' \in \bar{\omega}$. Then, for every $0<h<h_0$ that satisfies \eqref{ch5in}, where $h_0$ satisfies Lemma \ref{convex2-l2}, the functional $\mathcal{J}_2$ defined as in \eqref{model2} has a minimizer in $V^h$.
    \end{theorem}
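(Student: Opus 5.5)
The plan mirrors the proof of Theorem \ref{existence1}. We check that $\mathcal{J}_2$ from \eqref{model2} satisfies every hypothesis of Theorem \ref{general-existence} with $p=q=2$; the existence of a minimizer in $V^h$ then follows from that theorem. We take the stored energy density to be
\begin{align*}
W(x',m)=\Big[\mathcal{G}(\nabla m,\nabla n_m)+W_{\rm curv}({\rm a}_m,{\rm a}_m{\rm A}_m^\pm)\Big]{\rm a}_{y_0},
\end{align*}
using the representation \eqref{convex2-e1}. For the load term we take $L=\mathcal{L}$, which is affine and continuous in $(m,n_m)$ on $W^{1,2}\times W^{1,2}$ because of \eqref{e4o}–\eqref{e5o}. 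The constant parts, coming from $y_0$ and from $n_{y_0}$, only shift the functional.

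\textbf{Polyconvexity.} Expanding ${\rm a}_m{\rm A}_m^\pm={\rm a}_m\mp h\,{\rm H}_m{\rm a}_m+\frac{h^2}{4}{\rm K}_m{\rm a}_m$ shows that $W_{\rm curv}$ depends only on the triple $(a,b,c)=(1,{\rm H}_m,{\rm K}_m){\rm a}_m$. It does so through the affine expressions $a$ and $a\mp hb+\frac{h^2}{4}c$. By Lemma \ref{convex3-l3}, $W_{\rm curv}$ is a convex function of these arguments: it is built from $-\log$ and squares with positive coefficients, and the weights ${\rm A}_{y_0}^\pm$ are positive by \eqref{ch5in}. Composing a convex function with affine maps keeps it convex. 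By Lemma \ref{convex2-l2}, $\mathcal{G}$ is convex in $(E,G)=(\nabla m,\nabla n_m)$ whenever $0<h<h_0$. Multiplying by ${\rm a}_{y_0}>0$ preserves convexity. Hence the sum $\mathbb{W}(x',A,B,a,b,c)$ is convex on $\mathbf{M}$. Measurability in $x'$ follows from the continuity of the coefficients, which depend only on $y_0\in C^2(\bar\omega)$.

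\textbf{Coercivity.} The proof of Lemma \ref{convex2-l2} gives a lower bound on the quadratic part in \eqref{lem3.5-exeq2}. By shrinking $h_0$ slightly so that \eqref{lem3.5-exeq3} holds strictly, this lower bound becomes $c(|\nabla m|^2+|\nabla n_m|^2)$ with some $c>0$. Here we use that ${\rm I}_{y_0}^{-1/2}$ is uniformly invertible on $\bar\omega$. For the curvature part we use that $x^2-C\log x$ is bounded below on $(0,\infty)$. The positive term $\frac{\lambda}{4}\cdot 4{\rm a}_m^2/{\rm a}_{y_0}$ dominates the $-\log{\rm a}_m$ term. The terms $({\rm a}_m{\rm A}_m^\pm)^2$ likewise dominate $-\log({\rm a}_m{\rm A}_m^\pm)$. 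This gives $W_{\rm curv}\geq c\,{\rm a}_m^2-C$. Together these yield hypothesis 3 with $p=q=2$.

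\textbf{Orientation preservation.} This is read off directly. As ${\rm a}_m\to0^+$, the term $-\log{\rm a}_m$ tends to $+\infty$ while every other term stays bounded below. As ${\rm A}_m^\pm\to0^+$, the term $-\log({\rm a}_m{\rm A}_m^\pm)$ tends to $+\infty$. The other terms stay bounded below because the quadratic forms are nonnegative.

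\textbf{Finiteness of the infimum.} The reference map $m=y_0$ belongs to $V^h$ by \eqref{ch5in}, and it gives a finite energy. So $\inf\mathcal{J}_2<+\infty$.

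\textbf{Main obstacle.} I expect the only delicate point to be coercivity. Lemma \ref{convex2-l2} gives only nonnegativity, so the argument needs strict positive definiteness, with a constant uniform in $x'$. I would obtain it by taking the defining inequality for $h_0$ strictly and invoking compactness of $\bar\omega$, as in the proof of Lemma \ref{convex1-l1}.
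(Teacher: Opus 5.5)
Your proposal is correct and is essentially the paper's proof. Both apply Theorem~\ref{general-existence} with $p=q=2$, take polyconvexity from Lemmas~\ref{convex2-l2} and~\ref{convex3-l3}, and check the remaining hypotheses directly; the paper merely asserts that assumptions 2--4 are easy to see, whereas you supply the coercivity, orientation-preservation and finiteness arguments. One small simplification: no shrinking of $h_0$ is needed, because for every $h<h_0$ with $h_0$ as in \eqref{lem3.5-exeq4} one already has $h^2\max_{\bar\omega}{\rm T}_{y_0}<1$, so the lower bound \eqref{lem3.5-exeq2} is uniformly positive definite on $\bar\omega$.
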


    \begin{proof}
   We will apply Theorem \ref{general-existence}. To this end, it is easy to see that the functional $\mathcal{J}_2$ fulfills assumptions 2--4 of Theorem \ref{general-existence} with $p=q=2$. From Lemmas \ref{convex2-l2} and \ref{convex3-l3}, the polyconvexity follows. Thus, our proof is complete.
    \end{proof}

    \begin{theorem}
    \label{existence3}
     Let $\omega \subset \mathbb{R}^2$ be an open bounded domain with Lipschitz boundary. Let $y_0 \in C^2(\bar{\omega}, \mathbb{R}^3)$ be such that $\partial_{x_1} y_0(x')$ and $\partial_{x_2} y_0(x')$ are linearly independent for every $x' \in \bar{\omega}$. Then, for every $0<h<h_0$ that satisfies \eqref{ch5in}, where $h_0$ satisfies Lemmas \ref{convex1-l1} and \ref{convex4-l4}, the functional $\mathcal{J}_3$ defined as in \eqref{model3} has a minimizer in $V^h$.
    \end{theorem}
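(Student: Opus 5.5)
The plan is to apply Theorem \ref{general-existence} with $p=q=2$, exactly as in the proofs of Theorems \ref{existence1} and \ref{existence2}. The only new ingredient is the term $W^{(2)}_{\rm curv}$ from \eqref{wcurv4}, which replaces the Simpson-type quadratic volumetric part. We therefore have to check the four hypotheses for the density
\[
W(x',m)=\Big[W^{(1)}_{\rm shell}+W^{(1)}_{\rm curv}+W^{(2)}_{\rm curv}\Big]{\rm a}_{y_0},
\]
where ${\rm a}_{y_0}>0$ is continuous on $\bar\omega$. The load term $\mathcal{L}(m,n_m)$ is a continuous linear form on $W^{1,2}\times W^{1,2}$, as already used for $\mathcal{J}_1$.

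\textbf{Polyconvexity.} Define $\mathbb{W}(x',A,B,a,b,c)$ as the sum of three pieces:
\begin{itemize}
\item $\mathcal{F}(A,B)$ from \eqref{convex1-e2};
\item the Simpson logarithmic term, with ${\rm a}_m{\rm A}_m^\pm$ replaced by $a\mp hb+\frac{h^2}{4}c$ and ${\rm a}_m$ by $a$;
\item $F\big(a/{\rm a}_{y_0},\,(b-{\rm H}_{y_0}a)/{\rm a}_{y_0},\,(c-{\rm K}_{y_0}a)/{\rm a}_{y_0}\big)$ with $F$ as in \eqref{convex4-e2}.
\end{itemize}
For $h<h_0$, Lemma \ref{convex1-l1} gives convexity of the first piece. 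Lemma \ref{convex3-l3} gives convexity of the second, since it is $-\log$ of affine functions of $(a,b,c)$. Lemma \ref{convex4-l4} gives convexity of $F$. The arguments of $F$ are affine in $(a,b,c)$, with coefficients depending only on $x'$, because ${\rm a}_m\Delta{\rm H}_m={\rm H}_m{\rm a}_m-{\rm H}_{y_0}{\rm a}_m$ and similarly for ${\rm K}$. Composition with an affine map preserves convexity, so $\mathbb{W}(x',\cdot)$ is convex on $\mathbf{M}$. Measurability in $x'$ follows from continuity of ${\rm I}_{y_0},{\rm L}_{y_0},{\rm H}_{y_0},{\rm K}_{y_0},{\rm a}_{y_0}$ on $\bar\omega$.

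\textbf{Orientation preservation.} This is immediate from the $-\log$ terms in $W^{(1)}_{\rm curv}$. Their coefficients $-\frac{\lambda+2\mu}{4}\frac h6{\rm A}^\pm_{y_0}$ and $-\frac{\lambda+2\mu}{4}\frac{4h}{6}$ are negative multiples of $\log$, because ${\rm A}^\pm_{y_0}>0$ by \eqref{ch5in}. So $W\to+\infty$ when ${\rm a}_m$, ${\rm a}_m{\rm A}^+_m$ or ${\rm a}_m{\rm A}^-_m$ tends to $0^+$. The other terms stay bounded below, being convex quadratics that are nonnegative up to constants.

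\textbf{Coercivity (main obstacle).} This is the step needing real care, since unlike $\mathcal{J}_1$ we no longer have the Simpson quadratic $\frac{\lambda}{4}\sum {\rm a}_m^2(\ldots)$ providing the ${\rm a}_m^2$ growth. I would obtain it as follows.
\begin{itemize}
\item The Hessian of $F$ is positive definite for small $h$. The $3\times3$ determinant in \eqref{convex4-e7} has leading term $\frac{2}{135}h^9>0$, and the diagonal entry $2h+\frac{{\rm K}_{y_0}h^3}{6}$ is positive. Hence, possibly after shrinking $h_0$ and using compactness of $\bar\omega$ as in \eqref{extra1}, $F(r,X,Y)\ge c_h(r^2+X^2+Y^2)\ge c_h r^2$ with $c_h>0$ uniform in $x'$. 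This yields $W^{(2)}_{\rm curv}\ge c\,{\rm a}_m^2-C$.
\item For $W^{(1)}_{\rm shell}$, strict positivity in \eqref{convex1-e9} with uniform constants gives $\mathcal{F}_a\ge c(|\nabla m|^2+|\nabla n_m|^2)$. This holds if the inequalities \eqref{lem3.4-exeq1}--\eqref{lem3.4-exeq2} are made strict by a slight further reduction of $h_0$, together with the uniform bounds $|E\,{\rm I}_{y_0}^{-1/2}|\ge c|E|$. The term $\mathcal{F}_b$ is nonnegative.
\item The logarithmic terms are bounded below by $-C(1+{\rm a}_m)$, since $-\log x\ge 1-x$. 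Each ${\rm a}_m{\rm A}^\pm_m$ is at most $C({\rm a}_m+|{\rm a}_m{\rm H}_m|+|{\rm a}_m{\rm K}_m|)$. The quantities ${\rm a}_m{\rm H}_m$ and ${\rm a}_m{\rm K}_m$ are in turn controlled, through the full quadratic lower bound on $F$ in $(r,X,Y)$, by $X^2+Y^2$.
\end{itemize}
Absorbing these linear terms into the quadratic ones via Young's inequality gives
\[
W\ge C_1\big(|\nabla m|^2+|\nabla n_m|^2+{\rm a}_m^2\big)+C_2 .
\]

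With all four hypotheses verified, Theorem \ref{general-existence} applies, and $\inf\mathcal{J}_3<\infty$ is witnessed by $m=y_0$. This yields a minimizer in $V^h$ for all $0<h<h_0$ satisfying \eqref{ch5in}, with $h_0$ the minimum of the thresholds from Lemmas \ref{convex1-l1} and \ref{convex4-l4}, shrunk if needed for strict positivity.
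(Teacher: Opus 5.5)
Your proposal is correct and follows the paper's route: you apply Theorem \ref{general-existence} with $p=q=2$, and you obtain polyconvexity from Lemmas \ref{convex1-l1}, \ref{convex3-l3} and \ref{convex4-l4} through the affine substitution ${\rm a}_m\Delta{\rm H}_m={\rm a}_m{\rm H}_m-{\rm H}_{y_0}{\rm a}_m$ (and likewise for ${\rm K}$), which is exactly the content of Remark \ref{sec3-subsec1-rem1}(ii). The paper dismisses coercivity, orientation preservation and finiteness of the infimum as ``easily seen'', and your explicit checks supply precisely those omitted details: uniform positive definiteness of $\nabla^2 F$ to recover the ${\rm a}_m^2$ growth, strict positivity of $\mathcal{F}_a$, absorbing the $-\log$ terms by Young's inequality, and $m=y_0$ as a finite-energy competitor.
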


    \begin{proof}
    Again, it suffices to show that $\mathcal{J}_3$ satisfies all the assumptions of Theorem \ref{general-existence}, with notice that here $p=q=2$. This can be done with the help of Lemmas \ref{convex1-l1}, \ref{convex3-l3}, and \ref{convex4-l4} as well as Remark \ref{sec3-subsec1-rem1}(ii). The other assumptions can be easily seen. Thus, our desired result follows as a direct consequence of Theorem \ref{general-existence}. Our proof is complete.
    \end{proof}

\begin{remark}
\label{finalremark}
\begin{itemize}
    \item[(i)] We note that since $W_{\textrm{\rm curv}}^{(2)}({\rm a}_m, {\rm a}_m \Delta{\rm H}_m, {\rm a}_m \Delta{\rm K}_m)$ does not remain convex if we ignore its terms of order $O(h^5)$. Therefore, the result existence for a ${\rm O}(h^3)$ model obtained from the model defined by eliminating the terms of order $O(h^5)$ from the functional $\mathcal{J}_3$ cannot be proven.
    \item[(ii)] Our results also hold for more general Dirichlet boundary conditions, i.e.,
    $$
    m\mid_{\gamma _{y_0}}\,=\,\tilde{m} \qquad \text{and }\qquad  n_m\mid_{\gamma_{y_0}} \,= n_{\tilde{m}},
    $$
    where $\tilde{m} :\bar{\omega} \to \mathbb{R}^3$ is a sufficiently regular mapping. The proofs will be similar to those above, with a slight modification of the existence result by Anicic.
    \item[(iii)] In the special case where the reference configuration is a plate, we have
    $$
    {\rm H}_{y_0} = {\rm K}_{y_0} = {\rm C}_{y_0} = 0\quad \textrm{ on } \quad \bar{\omega},
    $$
    where $C_{y_0}$ is defined in \eqref{def-Cy0}. Therefore, it is not hard to see that \eqref{ch5in}, Lemmas \ref{convex1-l1}, \ref{convex2-l2}, and \ref{convex4-l4} hold for every $0<h<+\infty$. As a consequence, our existence results hold without any restriction on the thickness of the plate.
   \end{itemize}
\end{remark}

    \section{Conclusions and final remarks}\setcounter{equation}{0}

In this work, we have derived nonlinear shell models from the three-dimensional nonlinear Neo--Hooke--Ciarlet--Geymonat model. The derivation is carried out in a variational framework by combining the Kirchhoff--Love kinematical ansatz with a dimensional reduction through the thickness together with Simpson's quadrature rule. The resulting two-dimensional energies are expressed in terms of intrinsic geometric quantities of the deformed midsurface. The obtained shell energies depend on the first, second, and third fundamental forms of the midsurface, as well as on its mean and Gaussian curvatures. These quantities arise naturally in the dimensional reduction process and, therefore, are not a posteriori introduced artificially in the structure of the model. In particular, the volumetric contribution of the Ciarlet--Geymonat energy produces curvature-dependent terms involving ${\rm a}_m {\rm A}_m^\pm$, ${\rm a}_m\Delta{\rm H}_m$, and ${\rm a}_m\Delta{\rm K}_m$, which connect the present derivation with curvature energies appearing in geometric and membrane theories.

A characteristic feature of the proposed models is that the effective constitutive coefficients are inherited directly from the three-dimensional constitutive parameters, while the influence of the geometry of the reference configuration appears in the expressions of the coefficients of the two-dimensional shell model explicitly, too, through the curvatures of the initial midsurface. This provides a consistent link between three-dimensional nonlinear elasticity and geometrically nonlinear shell theories as it shows that the elastic properties of the shells depend on the material parameters (Lam\'e coefficients), the thickness, and the geometry of the referential (undeformed) configuration.

From the analytical point of view, the structure of the resulting functionals allows us to establish the existence of minimizers for the associated variational problems. We consider a polyconvexity concept in the shell theories, and we show that the energies of the proposed models inherit this polyconvexity property from the classical polyconvexity of the three-dimensional parental energy. It is also important that the variational problems are written in terms of some quantities for which a compensated compactness argument identifies their weak limits.

The models obtained in this paper therefore provide a mathematically consistent class of nonlinear shell theories in which all terms arise directly from the dimensional reduction of a well-posed three-dimensional elastic model, without the need for additional ad hoc corrections to enforce coercivity, convexity, or orientation preservation.

The linearization of the models presented in this work lies beyond the main scope of the present paper. Nevertheless, we would like to highlight several particularities of the corresponding linearized models, as this may help the readers to understand the consequences of discarding too early certain quantities involving the coordinate \(x_3\). In our derivation, these terms are retained until the final stage, where transparent approximation rules are applied. Discussing the structural form of the linearized versions of our model is also useful for relating our nonlinear formulation to the model proposed in \cite{anicic1999formulation}. Moreover, it contributes to the discussion concerning which strain measures are appropriate for describing bending effects and variations of curvature \cite{GhibaNeffPartVI,vsilhavycurvature,anicic1999formulation,anicic2002mesure}. Besides these aspects related to elasticity theory, the strain measures used here and the class of models considered in this paper may also be of interest from a purely geometric point of view.

In the linearization process, we represent the total deformation of the midsurface as
\begin{align}
m(x_1,x_2)=y_0(x_1,x_2)+v(x_1,x_2),
\end{align}
where \(v:\omega\to \mathbb{R}^3\) denotes the infinitesimal displacement of the shell midsurface.  After linearization, see \cite{Ciarlet88}, these are given by
\begin{equation}
\label{equ11}
\mathcal{G}_{\rm{Koiter}}^{\rm{lin}}
:=\frac{1}{2}\big[{\rm I}_m - {\rm I}_{y_0}\big]^{\rm{lin}}
= \sym\big[(\nabla y_0)^{T}(\nabla v)\big]
\in {\rm Sym}(2),
\end{equation}
and
\begin{align}
\label{equ12}
\mathcal{R}_{\rm{Koiter}}^{\rm{lin}}
:= \big[{\rm II}_m - {\rm II}_{y_0}\big]^{\rm{lin}}
=
\Big(
\bigl\langle n_0 ,
\partial_{x_\alpha x_\beta}v
-
\sum_{\gamma=1,2}
\Gamma^\gamma_{\alpha\beta}\partial_{x_\gamma}v
\bigr\rangle a^\alpha
\Big)_{\alpha\beta}
\in {\rm Sym}(2).
\end{align}
The expression of \(\mathcal{R}_{\rm{Koiter}}^{\rm{lin}}\) involves the Christoffel symbols
$
\Gamma^\gamma_{\alpha\beta}
$
associated with the surface parametrized by \(y_0\), given by
$
\Gamma^\gamma_{\alpha\beta}
=
\bigl\langle a^\gamma,\partial_{x_\alpha} a_\beta\bigr\rangle
=
-
\bigl\langle \partial_{x_\alpha} a^\gamma,a_\beta\bigr\rangle
=
\Gamma^\gamma_{\beta\alpha},
$
while \(a_1,a_2,a_3\) denote the columns of \(\nabla\Theta(x',0)\), and \(a^1,a^2,a^3\) denote the rows of \([\nabla\Theta(x',0)]^{-1}\). We let ${\rm Sym}(2)$  denote the set of all symmetric $2\times2$ matrices and for all $X\in\mathbb{R}^{2\times 2}$ we set ${\rm sym}\, X\,=\frac{1}{2}(X^T+X)\in{\rm Sym}(2)$.

There is a general consensus regarding the choice of the strain tensor used to measure metric changes in linearized shell models. In contrast, when bending effects and curvature variations are considered, several different measures appear in the literature. In \cite{GhibaNeffPartVI}, it is observed that there are many reasons for considering modified versions of the classical Koiter strain measures. It is also worth mentioning that Naghdi and Green \cite{naghdi1963nonlinear} regarded the direct use of differences of the first and second fundamental forms between two shell configurations as mainly based on a heuristic argument.

We emphasize that the linearization of our models naturally involves the variations of all three fundamental forms associated with the considered family of deformations. Consider a family of deformations
\begin{align}
\{m=y_0+\eta v \mid \eta\in\mathbb{R},\ v\in C^{2}(\omega)
\ \text{such that}\ y_0+\eta v\ \text{defines a regular surface}\}.\end{align}
For such a family of deformations of the midsurface, the classical strain measures appearing in Koiter-type shell models correspond to the change of metric and the flexural strain. In this respect, we recall the relations established by Blouza and Le Dret \cite{blouza1994lemme}, see also \cite{anicic2002mesure} and \cite{GhibaNeffPartVI},
\begin{align}
\label{fromdret}
\frac{1}{2}\frac{d\,{\rm I}_{y_0+\eta v}}{d\eta}\Big|_{\eta=0}
&=\mathcal{G}_{\rm{Koiter}}^{\rm{lin}},
\qquad
\frac{d\,{\rm II}_{y_0+\eta v}}{d\eta}\Big|_{\eta=0}
=\mathcal{R}_{\rm{Koiter}}^{\rm{lin}},
\end{align}
and
\begin{align}
\label{deriv3ffc}
\frac{d\,{\rm III}_{y_0+\eta v}}{d\eta}\Big|_{\eta=0}
=
2\,\sym\big([\mathcal{R}_{\rm{Koiter}}^{\rm{lin}}
-
\mathcal{G}_{\rm{Koiter}}^{\rm{lin}}{\rm L}_{y_0}]
{\rm L}_{y_0}\big).
\end{align}
Therefore, the linearized formulation of our model does not involve only the classical Koiter strain measures (the linearized change of metric $\mathcal{G}_{\rm{Koiter}}^{\rm{lin}}$ and the linearized flexural strain $\mathcal{R}_{\rm{Koiter}}^{\rm{lin}}$, see \cite{Ciarlet88}), but also the variation of the third fundamental form. From the definition of the functions \(\mathcal{F}_i\), \(i=0,1,2\), it follows that the influence of this variation in the linearized variational problem appears through the Koiter--Sanders--Budiansky linear bending measure
\[
\mathcal{R}^{\rm lin}_{\rm KSB}
:=
\mathcal{R}_{\rm{Koiter}}^{\rm{lin}}
-
\sym\big[\mathcal{G}_{\rm{Koiter}}^{\rm{lin}}{\rm L}_{y_0}\big]
\in {\rm Sym}(2),
\]
which was emphasized by Budiansky and Sanders \cite{budiansky1962best} as the measure preferred by Koiter himself. Although this tensor does not represent directly the change of curvature, Koiter referred to it as a ``modified tensor of change of curvature'', which has possibly led to some confusion in the literature. We also mention that the third fundamental form has recently been proposed as a bending measure in the context of flat shell models by \cite{virga2023pure}.

Hence, for our models, linearization leads to models that are able to capture both the change of the metric and the magnitude of the bending, the flexural strain tensor being present, too.

There is another particular feature of our models. After linearization, the energy terms involving the mean curvature \({\rm H}_m\) and the Gaussian curvature \({\rm K}_m\) lead to expressions depending on the variations \({\rm H}_m-{\rm H}_{y_0}\) and \({\rm K}_m-{\rm K}_{y_0}\), respectively. A minimal requirement for a curvature change tensor is that its linearization should characterize the local variations of both the mean curvature and the Gaussian curvature. Anicic and L\'eger \cite{anicic1999formulation} showed that, for the considered family of deformations, if the incremental curvature tensor
\[
\mathcal{R}_{\rm{AL}}^{\rm{lin}}(v)
:=
\mathcal{R}_{\rm{Koiter}}^{\rm{lin}}
-
2\,\sym[\mathcal{G}_{\rm{Koiter}}^{\rm{lin}}{\rm L}_{y_0}]
\in {\rm Sym}(2)
\]
vanishes, then the variations of the curvatures vanishes, too, i.e.,
\begin{align}
\frac{d{\rm H}}{d\eta}(y_0+\eta v)\Big|_{\eta=0}=0,
\qquad
\frac{d{\rm K}}{d\eta}(y_0+\eta v)\Big|_{\eta=0}=0.
\end{align}
In fact, these variations can be expressed in terms of \(\mathcal{R}_{\rm{AL}}^{\rm{lin}}(v)\). It is also worth noting that, based on different arguments, {\v{S}}ilhav\'y \cite{vsilhavycurvature} concluded that the curvature tensor introduced by Anicic and L\'eger is more appropriate as a measure of curvature variations in a linear Kirchhoff--Love shell theory than the Koiter--Sanders--Budiansky bending measure. His approach consists of determining the three-dimensional strain tensor associated with a shear deformation of a shell-like body and subsequently linearizing it with respect to the displacement field and to the distance from the midsurface.

While usually in the literature, beside the change of metric measure, another only a single strain measure is selected in advance--either for the flexural strain, the bending effects, or the change of curvature-- we observe that in our model all these effects are taken into account simultaneously, and the structure of the resulting functionals follows naturally from the derivation, without the need for an ad hoc inclusion or exclusion of particular terms.
   
\begin{footnotesize}
\section*{Acknowledgments}
The second author is supported by the grant PRIMUS/24/SCI/020 of Charles University, and within the frame of the project Ferroic Multifunctionalities (FerrMion) [project No. CZ.02.01.01/00/22\_008/0004591], within the Operational Programme Johannes Amos Comenius co-funded by the European Union (JS). Part of this work was performed when the second author was visiting the Faculty of Mathematics, Alexandru Ioan Cuza University of Ia\c si, Romania, and he would like to thank its members for their hospitality.

\bibliographystyle{plain} 

            \end{footnotesize}
\end{document}